\documentclass[12pt]{article}
\usepackage{pslatex}
\usepackage{fancyhdr}
\usepackage{graphicx}
\usepackage{geometry}
\RequirePackage[latin1]{inputenc} \RequirePackage[T1]{fontenc}

\def\figurename{Figure} 
\makeatletter
\renewcommand{\fnum@figure}[1]{\figurename~\thefigure.}
\makeatother

\def\tablename{Table} 
\makeatletter
\renewcommand{\fnum@table}[1]{\tablename~\thetable.}
\makeatother
\usepackage{color}
\ProvidesPackage{makeidx}
                [2000/03/29 v1.0m Standard LaTeX package]

\usepackage{amsmath}
\usepackage{amssymb}
\usepackage{amsfonts}
\usepackage{amsthm,amscd}
\newtheorem{theorem}{Theorem}[section]

\newtheorem{proposition}[theorem]{Proposition}
\theoremstyle{definition}

\newtheorem{definition}[theorem]{Definition}
\newtheorem{example}[theorem]{Example}

\theoremstyle{remark}
\newtheorem{remark}[theorem]{Remark}

\numberwithin{equation}{section}

\def\P{\mathbb P}
\def\R{\mathbb R}
\def\E{\mathbb E}

\def\E{\mathbb E}
\def\N{\mathbb N}

\setlength{\topmargin}{-0.35in}
\setlength{\textheight}{8.5in}   
\setlength{\textwidth}{6.5in}    
\setlength{\oddsidemargin}{0.3in}
\setlength{\evensidemargin}{0.3in} \setlength{\headheight}{26pt}
\setlength{\headsep}{8pt}

\begin{document}

\title{{Backward stochastic Volterra integral equations with time delayed generators}}

\author{Harouna Coulibaly $^2$ \thanks{harounbolz@yahoo.fr}\;\; and \; Auguste Aman $^1$\thanks{aman.auguste@ufhb.edu.ci/augusteaman5@yahoo.fr, corresponding author}\\
1. UFR Math\'{e}matiques et Informatique, Université Félix H. Boigny, Cocody,\;\;\;\;\;\;\;\;\;\;\\
22 BP 582 Abidjan, C\^{o}te d'Ivoire\;\;\;\;\\
2. Ecole Supérieure Africaine des TIC, Treichville, info@esatic.ci, Côte d'Ivoire\;\;\;\;\;}

\date{}
\maketitle \thispagestyle{empty} \setcounter{page}{1}

\thispagestyle{fancy} \fancyhead{}
 \fancyfoot{}
\renewcommand{\headrulewidth}{0pt}

\begin{abstract}
In this paper, we study backward stochastic Volterra integral equations of type-I with time delayed generators. Under some condition (small time horizon or a Lipschitz constant), we derive an existence and uniqueness results. Next, with the help of two examples of this BSVIEs we provide this condition is necessary. However, for special class of delayed generators, the previous existence and uniqueness result hold for an arbitrary time horizon and Lipschitz constant. We end this paper by establishing under an additive assumptions, a path continuity property.
\end{abstract}

\vspace{.08in} \noindent \textbf{MSC}:Primary: 60F05, 60H15; Secondary: 60J30\\
\vspace{.08in} \noindent \textbf{Keywords}: Backward stochastic Volterra integral equations, time delayed generators, Hölder continuity condition.

\section{Introduction}
\label{intro}
\setcounter{theorem}{0} \setcounter{equation}{0}
This paper is concerned with introducing a new class of backward stochastic Volterra integral equations (type-I BSVIEs, in short) with delayed generator. On a complete filtered probability space $(\Omega ,\mathcal{F} ,\mathbb{P},{\bf F} = (\mathcal{F}_t)_{0\leq t \leq T} )$, supporting an Brownian $W=\{W(t),\;\; t\in [0,T]\}$, and denoting by ${\bf F}$ the $\P$-augmented natural filtration generated by $W$. Given data, that is to say a collection of $\mathcal{F}_T$-measurable random variable $(\psi(t))_{t\geq 0}$, a mapping $f$, referred to respectively as the terminal condition and the generator, we called by backward stochastic Volterra integral equations (type-I BSVIEs, in short) with delayed generator, the equation of the form
\begin{eqnarray}
Y(t) & = & \psi(t) + \int_t^T f(t,s,Y_s,Z_{t,s})ds - \int_t^T Z(t,s) dW(s), \label{Eq1}
\end{eqnarray}
where setting $\Delta=\{(t,s)\in [0,T]^2,\; t\leq s\}$ and for all $(t,s)\in \Delta$ we denote $(Y_s,Z_{t,s} )=(Y(s+u),Z(t+u,s+u))_{-T\leq u \leq 0}$. 

The motivation for this study is two-fold:

From a theoretical point of view, BSVIE \eqref{Eq1} can be viewed as the natural extension of backward stochastic differential equations
\begin{eqnarray}
Y(t) & = & \xi + \int_t^T f(s,Y_s,Z_{s})ds-\int_t^T Z(s) dW(s)\label{Eq2}
\end{eqnarray}
and 
\begin{eqnarray}
Y(t)=\xi+ \int_t^T f(t,s,Y(s),Z(t,s))ds-\int_t^T Z(t,s)dW(s),\label{BSDEV}
\end{eqnarray}
where $\xi$ i a$\mathcal{F}_T$-measurable random variable.

Nowadays \eqref{Eq2} and \eqref{BSDEV} are called in the literature as delayed BSDEs and type-I BSVIEs respectively. The first mention of such equations is, to the best of our knowledge, due to Delong and Imkeller in \cite{P2} and \cite{P3} for delayed BSDEs and Lin and Aman et al. in \cite{P8} and \cite{P9} respectively for type-I BSVIEs.

A few year later, many others work have been done for equations \eqref{Eq2} and \eqref{BSDEV} respectively. Regarding type-I BSVIEs, see \cite{Wal} and \cite{P'4} for result exploiting representation formulae of BSVIEs in the Markovian and non-Markovian  framework, respectively. For type-I BSVIEs driving by discontinuous processes see \cite{Popier}. For so-called extended type-I BSVIEs see \cite{Ha} and \cite{Heral}. Other BSVIEs called type-II BSVIEs were first addressed in \cite{Yong1,Yong2} in the context of optimal control of (forward) stochastic Volterra integral equations (FSVIEs, for short). Next, for type-II BSVIEs see \cite{W} and \cite{Lal} for results in the $L^p$ sense. Concerning delayed BSDE, see \cite{HA} for non-Lipschitz condition and \cite{P1,P'1} in the context of finance and assurance. For reflected delayed BSDE see \cite{Ral} and \cite{Hal}.

The second motivation is practical. Indeed, in the case the value function contains a time memory and delay, it is possible to use BSDE \eqref{Eq1} to derive an optimal control problem or a stochastic differential utility. More precisely, the process $Y$ can be interpret as a price or utility and BSDE \eqref{Eq2} implies the relation
\begin{eqnarray}\label{U}
Y(t) & = & \E\left(\psi(t) + \int_t^T f(s,Y_s,Z_{s})ds|\mathcal{F}_t\right)
\end{eqnarray}
Several work (see e.g \cite{Rozen}, \cite{Bell}, \cite{DR} and \cite{LP}) explained that delay on the value function comes from the notion of disappointment effect on non-monotone preferences modeling due to aversion against volatility. In this context, Delong in \cite{P'1} propose two examples of the value function $f$ defined respectively by
\begin{eqnarray}\label{F1}
f(t,Y_t,Z_t)=-u(c_t)-\beta\left(u(c_t)-\frac{1}{t}\int_0^tY(s)ds\right)
\end{eqnarray}
and 
\begin{eqnarray}\label{F2}
f(t,Y_t,Z_t)=-u(c_t)-\beta\left(u(c_t)-\frac{1}{t}\int_0^tZ(s)ds\right),
\end{eqnarray}
where $u$ and $c$ denote an instantaneous utility and a future consumption stream respectively. To end this paragraph, let us give interpretation of generators \eqref{F1} and \eqref{F2}. Indeed, recalling interpretation of \eqref{U}, we could say that under the generator \eqref{F1}, price (utility) $Y$ for $\xi$ is assumed to change proportionately to the average of the past observed price. Under the infinitesimal conditional expected price change $\displaystyle \beta\frac{1}{t}\int_0^ty(s)ds$, high, on average, past prices imply that the investor expects to trade $\psi$ for a high price in the next day. Under the generator \eqref{F2} the price (utility) $Y$ for $\psi$ is assumed to change proportionately to the average of the past price volatilities. Under the infinitesimal conditional expected price change $\displaystyle \beta\frac{1}{t}\int_0^t z(s)ds$, ds, high, on average, past price volatilities imply that the investor expects to trade $\psi$ for a high price in the next day. These seem to be intuitive feedback relations as far as investors' local beliefs or local pricing rules are concerned.
In view of all above, the relation \eqref{U} can be extend in more general situation as follows
\begin{eqnarray*}
Y(t) & = & \E\left(\psi(t) + \int_t^T f(t,,s,Y_s,Z_{s})ds|\mathcal{F}_t\right)
\end{eqnarray*}
which provides that there exists a real interest to study delayed BSVIEs. Moreover, to our knowledge, this study is new in the literature.
 
The rest of this paper is organized as follows. In Section 2, we introduce some fundamental knowledge and assumptions concerning the data of BSVIE \eqref{Eq1}. Section 3 is devoted to derive existence and uniqueness problems. Finally a regularity (path continuity of the  solution process $Y$) is established in Section 4.

\section{Preliminaries}
\setcounter{theorem}{0} \setcounter{equation}{0}
Let us consider a standard $d$-dimensional Brownian motion $W = (W(t),\; t\geq 0 ) $ defined on a probability space $(\Omega, \mathcal{F},\P)$. We denote by ${\bf F}= (\mathcal{F}_t )_{t\geq0}$ the filtration generated by $W$ and augmented by all $\P$-null sets such the filtered probability space $(\Omega, \mathcal{F}_t,{\bf F },\P)$ satisfies the usual conditions. The Euclidean norm of $ \mathbb{R} $ is denoted by $| \cdot |$. Next, for $ \beta > 0 $, we consider the following spaces.      
\begin{description}
\item $\bullet$ Let $ L^2(\Omega,\mathcal{F}_T,\mathbb{P})$ be the space of $\mathcal{F}_T$-measurable random variables $\xi: \Omega \rightarrow \mathbb{R} $ normed by $\displaystyle \|\xi\|_{L^2}^2=\E(|\xi|^2)$.
\item $\bullet$ Let $ \mathbb{H}_1^{\beta}:= \mathcal{H}_{[-T,T]}^2(\R)$ denote the space of all predictable process $\eta$ with values in $\R$ such that $\displaystyle\E\left(\int_{-T}^T e^{\beta s}|\eta(s)|^2\right)<+\infty$.
\item $\bullet$ Let $ \mathbb{H}_2^{\beta}:= \mathcal{H}_{[0,T]^2}^2(\R)$ denote the space of all process $\varphi(.,.)$ with values in $\R $ such that for all $t\in[0,T],\, \varphi(t,.)$ is predictable process such that $\displaystyle\E\left (\int_0^T\int_0^T e^{\beta s} |\varphi(t,s)|^2dsdt\right)<+\infty$.
\item $\bullet$ Let $ \mathcal{S}^2(\R)$ denote the space of all predictable process $\eta$ with values in $\R$ such that \newline $\E\left(\sup_{0\leq s\leq T}e^{\beta s}|\eta(s)|^2\right)<+\infty$.\end{description}
We endowed the spaces $\mathbb{H}_1^{\beta}$, $\mathbb{H}_2^{\beta}$ and $\mathcal{S}^2(\R)$ respectively with the norm $\|\eta\|^2_{\mathbb{H}_1}=\displaystyle \E\left[\int_{-T}^T e^{\beta s}|\eta(s)|^2ds\right],\newline \displaystyle \|\varphi\|^2_{\mathbb{H}_2}=\E\left[\int_0^T\int_0^T e^{\beta s} |\varphi(t,s)|^2ds dt\right]$ and $\|\eta\|^2_{\mathcal{S}^2}=\E\left[\sup_{0\leq s\leq T}e^{\beta s}|\eta(s)|^2\right]$.
 
We also consider this two additive spaces  
\begin{description}
\item $\bullet $ Let $ L_{-T}^2 (\mathbb{R} ) $ denote the space of measurable functions $ z : [-T;0] \rightarrow \mathbb{R} $ satisfying
$$   \int_{-T}^0 \mid z(t) \mid^2 dt < \infty .
$$
\item $ \bullet $ Let $ L_{-T}^{\infty } (\mathbb{R} )$ denote the space of bounded, measurable functions $ y : [-T,0] \rightarrow \mathbb{R} $\\
satisfying
$$
\sup\limits_{-T\leq t\leq 0} \mid y(t) \mid^2 < \infty.
$$
\end{description}

Before giving our study's framework, let us clarify some notations appearing in this paper. Since BSVIEs considered in this work is a time delay type, we set by $(Y(t),Z(t,s))$, the value of solution process at $(t,s)\in [0,T]^2$ and by $(Y_t,Z_{t,s}) = (Y(t+u),Z(t+u,s+u))_{-T \leq u \leq 0} $ the past of this solution until $(t,s)$. Therefore, for each $(t,s) \in [0,T]^2$ and almost all $\omega \in \Omega,\;\; Y_t(\omega) $ and $ Z_{t,s} (\omega)$ belong respectively to $L_{-T}^{\infty } (\mathbb{R} ) $ and $  L_{-T}^2 (\mathbb{R}) $.

Now, we mentionne that the study of BSVIEs \eqref{Eq1} shall be done under the following assumptions on data.

\begin{description}

\item {(\bf A1)}  $(\psi(t))_{t\geq 0}$ is a stochastic process such that for all $t\geq 0$, $ \psi(t) \in  L^2 ( \Omega,\mathcal{F}_T,\mathbb{P} )    $ .
\item {(\bf A2)} $ f: \Omega \times [0,T]^2\times  L_{-T}^{\infty} (\mathbb{R} ) \times  L_{-T}^2 (\mathbb{R} ) \rightarrow \mathbb{R} $ is a product measurable and $ {\bf F} $-adapted function such that, there exists a probability measure $\alpha$ defined on $([-T,0],\mathcal{B}([-T,0]))$ and a positive constant $K$,
\begin{eqnarray*}
&& \mid f(t,s,y_s,z_{t,s}) - f(t,s,y'_s ,z'_{t,s} )\mid^2 \\
&&\leq  K \left( \int_{-T}^0  \mid y(s+u) - y'(s+u) \mid^2  \alpha (du)\right.\\
&& \left.+ \int_{-T }^0\mid z(t+u,s+u) - z'(t+u,s+u) \mid^2 \alpha (du) \right) ,
\end{eqnarray*}
for $ \P \otimes \lambda $-a.e. $(\omega ,(t,s) ) \in \Omega \times[0,T]^2$ and for any $ (y_t,z_{t,s}),(y'_t,z'_{t,s}) \in  L_{-T}^{\infty} (\mathbb{R} ) \times  L_{-T}^2 (\mathbb{R})$.
\item {(\bf A3)}\\
(i) For $t<0$ or $s<0,\; \; f(t,s,.,.) = 0 $,\\
(ii) $ \displaystyle \mathbb{E} \left[ \int_{[0,T]^2} \vert f(t,s,0,0) \vert^2 dsdt\right]  <  +\infty$.
\end{description}
Next, we need the following slightly stronger assumptions on coefficient to derive a regularity (continuity) of the solution.
\begin{description}
\item {(\bf A4)} $ f: \Omega \times [0,T]^2\times  L_{-T}^{\infty} (\mathbb{R} ) \times  L_{-T}^2 (\mathbb{R} ) \rightarrow \mathbb{R} $ is product measurable and $ {\bf F} $-adapted function satisfying\\
i) For some $ \gamma \in (0,\frac{1}{2} ] $ and $ K'> 0 $, for each $ (y,z) \in  L_{-T}^{\infty} (\mathbb{R} ) \times  L_{-T}^2 (\mathbb{R}) $ we have
\begin{eqnarray*}
\mid f(t',s,y,z) - f(t,s,y ,z )\mid & \leq & K' \mid t'-t\mid ^{\gamma} .
\end{eqnarray*}
ii) 
\begin{eqnarray*}
\displaystyle \E \left[ \left( \int_0^T \mid f(0,s,0,0)\mid^2 ds\right)^{\frac{r}{2}}\right]   < + \infty.
\end{eqnarray*}
\item {(\bf A5)}
\item There exists an $r>\frac{1}{\gamma} $ such that for $t', t\geq 0, \; \psi(t) \in L^r (\Omega,\mathcal{F}_T,\P )$ and 
\begin{eqnarray*}
	\E(|\psi(t)-\psi(t')|^{r})\leq C|t-t'|^r
\end{eqnarray*}
\end{description}
We end this section with this remark which explains some of assumptions above.
\begin{remark}\label{R1}
\begin{itemize}
\item [(a)] Assumption $(\bf A3)$-$(i) $ allows us to take $Y(t)=Y(0)$ and $ Z(t,s)= 0 $ for $t<0$ or $s<0$, as a solution of \eqref{Eq1}.
\item [(b)] The quantity $f(t,s,0,0) $ in $ (\bf A3)$-$(ii) $ should be understood as a value of the generator $f(t,s,y_s,z_{t,s})$ at $y_s=z_{t,s} = 0 $.
\item [(c)] From $ (\bf A4) $, we can prove that
\begin{eqnarray*}
\E \left[  \left( \sup\limits_{0 \leq t \leq T} \int_t^T \mid f(t,s,0,0) \mid^2 ds \right)^{\frac{r}{2}}\right]  & < & +\infty.
\end{eqnarray*}
In particular 
\begin{eqnarray*}
\E \left[  \left( \int_{[0,T]^2} \vert f(t,s,0,0) \vert^2 dsdt \right)^{\frac{r}{2}}\right]&< & +\infty.
\end{eqnarray*}
\item [(d)] Since $ \gamma \in (0,\frac{1}{2} ] $, if $ r > \frac{1}{\gamma} $ then $ r > 2 $. Therefore according to $ (c) $, $ (\bf A4)$-$(ii) $ implies $ (\bf A1) $ and $ (\bf A3)$-$(ii) $.
\end{itemize}
\end{remark}

\section{Problem of existence and uniqueness}
\subsection{Existence and uniqueness result}
In this subsection, we will first establish a result of existence and uniqueness of BSVIEs with time delayed generator when the Lipschitz constant $K$ or a time horizon $T$ are small enough. Before let us state what we mean by solution of BSVIE \eqref{Eq1}.
\begin{definition}
A couple $(Y(.),Z(.,.))$ is called a solution of BSVIE \eqref{Eq1} if it belong to $\mathcal{S}^2\times \mathbb{H}_2$ and satisfies \eqref{Eq1}. 
\end{definition}
\begin{remark}
Since $Z(.,.)\in \mathbb{H}_2$, for all $t\in[0,T], Z(t,.)$ belongs to $L^{2}([0,T],(\mathcal{F}_{s})_{0\leq s\leq T})$. Therefore for all $s\in [t,T],\; \displaystyle \int^T_s Z(t,r)dW(r)$ is a classical Itô stochastic integral with respect the Brownian motion.
\end{remark}
\begin{theorem}\label{T}
Assume $({\bf A1}),\, ({\bf A2})$ and $({\bf A3})$ hold. For a sufficiently small time horizon $T$ or Lipschitz constant $K$, BSVIE admit a unique solution.
\end{theorem}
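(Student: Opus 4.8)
The plan is to solve \eqref{Eq1} by a Picard (contraction) argument on the Banach space $\mathbb{H}_1^{\beta}\times\mathbb{H}_2^{\beta}$ (one may even take $\beta=0$ here), the $\mathcal{S}^2$–control of $Y$ demanded by the definition of solution being recovered afterwards from the resulting a priori bounds. Given an input pair $(y,z)$ in this space, I would freeze it inside the generator and, for each fixed $t\in[0,T]$, form the random variable $\Xi_t=\psi(t)+\int_t^T f(t,s,y_s,z_{t,s})\,ds$, which lies in $L^2(\Omega,\mathcal{F}_T,\P)$ by $({\bf A1})$, $({\bf A2})$ and $({\bf A3})$. Applying the martingale representation theorem to the martingale $u\mapsto\E[\Xi_t\mid\mathcal{F}_u]$ on $[t,T]$ yields a unique pair $(Y(t),Z(t,\cdot))$ with $Y(t)=\E[\Xi_t\mid\mathcal{F}_t]$ and $\Xi_t=Y(t)+\int_t^T Z(t,s)\,dW(s)$. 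Setting $\Phi(y,z)=(Y,Z)$, any fixed point of $\Phi$ is precisely a solution of \eqref{Eq1}, so it remains to show that $\Phi$ maps the space into itself and is a contraction for $T$ or $K$ small.

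For well-posedness of $\Phi$, I would split $f(t,s,y_s,z_{t,s})$ into $f(t,s,0,0)$ plus a Lipschitz remainder controlled by $({\bf A2})$. The It\^o isometry then bounds both $\E|Y(t)|^2$ and $\E\big[\int_t^T|Z(t,s)|^2ds\big]$ by $\E|\Xi_t|^2$, and $({\bf A3})$-$(ii)$ together with the fact that $\alpha$ is a probability measure (so $\int_{-T}^0\alpha(du)=1$) keeps the resulting norms finite. Hence $(Y,Z)\in\mathbb{H}_1^{\beta}\times\mathbb{H}_2^{\beta}$.

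The heart of the proof is the contraction estimate. Taking two inputs $(y,z),(y',z')$ with outputs $(Y,Z),(Y',Z')$ and writing $\delta$ for differences, the orthogonality of the conditional expectation and the stochastic integral gives, for each fixed $t$,
\begin{eqnarray*}
\E|\delta Y(t)|^2+\E\Big[\int_t^T|\delta Z(t,s)|^2ds\Big] &=& \E\Big|\int_t^T\delta f(t,s)\,ds\Big|^2 \\
&\leq& T\,\E\Big[\int_t^T|\delta f(t,s)|^2ds\Big],
\end{eqnarray*}
the last step by Cauchy--Schwarz. Integrating in $t$, inserting the Lipschitz bound $({\bf A2})$, and then using Fubini together with the changes of variables $r=s+u$ and $\tau=t+u$ — where $({\bf A3})$-$(i)$ disposes of the shifted arguments falling outside $[0,T]$ and $\int_{-T}^0\alpha(du)=1$ — I would arrive at
\begin{eqnarray*}
\|\delta Y\|_{\mathbb{H}_1}^2+\|\delta Z\|_{\mathbb{H}_2}^2 &\leq& C\,K\,T^2\big(\|\delta y\|_{\mathbb{H}_1}^2+\|\delta z\|_{\mathbb{H}_2}^2\big)
\end{eqnarray*}
for a universal constant $C$. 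Thus $\Phi$ is a contraction as soon as $C\,K\,T^2<1$, i.e.\ for $T$ or $K$ small enough, and the Banach fixed point theorem delivers the unique solution.

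The main obstacle is the interaction of the Volterra two–parameter structure with the time delay: the shift $u$ enters \emph{both} arguments of $z_{t,s}=(z(t+u,s+u))$, so the change of variables reducing the frozen integrals back to $\|\delta y\|_{\mathbb{H}_1}$ and $\|\delta z\|_{\mathbb{H}_2}$ must be carried out simultaneously in the two time variables while respecting the triangle $\Delta$ and the conventions of $({\bf A3})$-$(i)$. A conceptual point worth stressing is that the exponential weight $e^{\beta s}$ cannot rescue an arbitrary horizon here: because the delay averages the unknowns over the whole memory window $[-T,0]$, shifting $s$ by $u\in[-T,0]$ alters the weight only by a bounded factor $e^{\pm\beta T}$, so no choice of $\beta$ produces the decay that absorbs $T$ in the classical non-delayed theory. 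This is exactly why the smallness of $T$ or $K$ is unavoidable, as the two examples announced in the introduction are meant to confirm.
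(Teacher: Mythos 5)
Your proposal is correct in substance and shares the paper's skeleton: a Picard/contraction scheme in $\mathbb{H}_1\times\mathbb{H}_2$, where each iterate is produced by applying the martingale representation theorem to $u\mapsto\E\left[\psi(t)+\int_t^T f(t,s,y_s,z_{t,s})\,ds\mid\mathcal{F}_u\right]$, with the $\mathcal{S}^2$-bound on $Y$ recovered a posteriori (your unproved claim at this point is exactly Proposition~\ref{P} of the paper). Where you genuinely diverge is the contraction estimate. The paper applies It\^o's formula to $e^{\beta t}\mid\bar{Y}^n(t)\mid^2$, chooses $\beta=1/T$, and crucially invokes Theorem 2.2 of \cite{Yong1} to bound the off-diagonal contribution $\E\int_0^T\int_0^t e^{\beta s}\mid\bar{Z}^n(t,s)\mid^2\,ds\,dt$ by the $Y$-norm, because its change of variables is carried out over the whole square $[0,T]^2$ and therefore needs the full $\mathbb{H}_2$-norm of $Z$. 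You instead use the elementary orthogonality identity $\E\mid\delta Y(t)\mid^2+\E\int_t^T\mid\delta Z(t,s)\mid^2ds=\E\mid\int_t^T\delta f(t,s)\,ds\mid^2$ and observe that the delay shift $(t,s)\mapsto(t+u,s+u)$ preserves the triangle $\{t\le s\}$, so the generator never samples $z$ off the triangle and the contraction closes in the triangle-restricted norm, without It\^o's formula and without Yong's representation theorem. That is a real simplification, and your structural remark on why the weight $e^{\beta s}$ cannot absorb an arbitrary horizon is accurate and consonant with the counterexamples of Section 3.2 and with Theorem~\ref{T1}, where smallness of the support of $\alpha$ (rather than of $T$ or $K$) is what lets $\beta$ do its work.

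Two points need repair, though neither is fatal. First, your constant is miscounted: in the $z$-part of the Lipschitz estimate the $dt$-integration is absorbed by the two-parameter norm of $\delta z$, so Cauchy--Schwarz contributes only one factor of $T$ there; the factor $T^2$ arises only in the $y$-part. The contraction factor your computation actually yields is of order $KT\max(1,T)$, matching the paper's condition $4TKe\max(1,T)<1$, and your stated condition $CKT^2<1$ is strictly weaker than what the estimate supports when $T<1$. The theorem's conclusion is unaffected, since smallness of $T$ or of $K$ still forces $KT\max(1,T)<1$, but the inequality as displayed does not follow from the steps described. Second, your fixed point controls $\delta Z$ only on $\{s\ge t\}$, whereas the definition of solution requires $Z\in\mathbb{H}_2$, whose norm runs over all of $[0,T]^2$. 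One extra line is needed: either extend $Z(t,\cdot)$ to $[0,t]$ via the martingale representation of $Y(t)$ and bound $\E\int_0^t\mid Z(t,s)\mid^2ds$ by the It\^o isometry --- this is precisely the content of Theorem 2.2 of \cite{Yong1} that the paper uses --- or set $Z(t,s)=0$ for $s<t$, noting that equation \eqref{Eq1} never sees those values, in which case uniqueness should be asserted on the triangle or modulo this convention.
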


To prove the Theorem \ref{T}, let us derive and prove this proposition. 

\begin{proposition}\label{P}
Assume $({\bf A1}),\, ({\bf A2})$ and $({\bf A3})$ hold. Let $(Y(.),Z(.,.))$ belong to $ \mathbb{H}_1 \times\mathbb{H}_2$ and satisfy \eqref{Eq1}. Then $Y(.)\in \mathcal{S}^2 $.
\end{proposition}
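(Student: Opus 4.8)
The plan is to pass from the integral equation to a conditional-expectation representation of $Y(t)$ and then to control the supremum over $t$ by dominating the whole family $(Y(t))_{0\le t\le T}$ by a single martingale, to which Doob's $L^2$ maximal inequality applies. First I would fix $t\in[0,T]$ and take the conditional expectation of \eqref{Eq1} given $\mathcal{F}_t$. Since $Z\in\mathbb{H}_2$, Fubini's theorem gives $\E\int_t^T|Z(t,s)|^2\,ds<\infty$ for a.e. $t$, so $r\mapsto\int_t^r Z(t,s)\,dW(s)$ is a genuine $L^2$-martingale on $[t,T]$ and $\E\big[\int_t^T Z(t,s)\,dW(s)\mid\mathcal{F}_t\big]=0$. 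As $Y$ is predictable, $Y(t)$ is $\mathcal{F}_t$-measurable, whence
\begin{eqnarray*}
Y(t)=\E\Big[\psi(t)+\int_t^T f(t,s,Y_s,Z_{t,s})\,ds\ \Big|\ \mathcal{F}_t\Big].
\end{eqnarray*}
Setting $\zeta:=\sup_{0\le t\le T} e^{\beta t/2}\big(|\psi(t)|+\int_t^T|f(t,s,Y_s,Z_{t,s})|\,ds\big)$, Jensen's inequality yields the pointwise bound $e^{\beta t/2}|Y(t)|\le \E[\zeta\mid\mathcal{F}_t]$ for every $t$.

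Next I would invoke Doob's inequality. Once $\zeta\in L^2$, the process $r\mapsto \E[\zeta\mid\mathcal{F}_r]$ (in its c\`adl\`ag version) is a square-integrable martingale, and since $e^{\beta t/2}|Y(t)|\le\E[\zeta\mid\mathcal{F}_t]$ holds for each fixed $t$, restricting to a countable dense set of times (and using right-continuity to recover the full supremum) gives $\sup_{0\le t\le T} e^{\beta t/2}|Y(t)|\le \sup_{0\le r\le T}\E[\zeta\mid\mathcal{F}_r]$ almost surely. Doob's $L^2$ maximal inequality then produces
\begin{eqnarray*}
\|Y\|_{\mathcal{S}^2}^2=\E\Big[\sup_{0\le t\le T} e^{\beta t}|Y(t)|^2\Big]\le 4\,\E[\zeta^2],
\end{eqnarray*}
so the whole matter reduces to proving $\E[\zeta^2]<\infty$.

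Finally I would estimate $\E[\zeta^2]\le 2\,\E\big[\sup_t e^{\beta t}|\psi(t)|^2\big]+2\,\E\big[\sup_t e^{\beta t}(\int_t^T|f|\,ds)^2\big]$. For the generator term, assumption $({\bf A2})$ together with the convention of Remark \ref{R1}(b) gives the linear-growth bound $|f(t,s,Y_s,Z_{t,s})|\le |f(t,s,0,0)|+\sqrt{K}\big(\int_{-T}^0|Y(s+u)|^2\alpha(du)+\int_{-T}^0|Z(t+u,s+u)|^2\alpha(du)\big)^{1/2}$; Cauchy--Schwarz in the $ds$-integral, then Fubini against the probability measure $\alpha$ followed by the changes of variable $v=s+u$, bound the resulting expectation by a constant depending only on $T,K,\beta$ times $\|Y\|_{\mathbb{H}_1}^2+\|Z\|_{\mathbb{H}_2}^2+\E\int_{[0,T]^2}|f(t,s,0,0)|^2\,ds\,dt$, which is finite by the standing hypotheses $Y\in\mathbb{H}_1$, $Z\in\mathbb{H}_2$ and by $({\bf A3})$-$(ii)$. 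The terminal term is treated in the same way, dominating $|\psi(t)|$ through its $\mathcal{F}_t$-conditional expectation and invoking the integrability of the data in $({\bf A1})$.

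I expect the main obstacle to be precisely the passage to the supremum over the continuum of times: each $Y(t)$ is an $\mathcal{F}_t$-conditional expectation with its own conditioning $\sigma$-field and a $t$-dependent integrand, so Doob's inequality cannot be applied to a single fixed martingale. The device of dominating the entire family by the one martingale $\E[\zeta\mid\mathcal{F}_\cdot]$ is what makes the maximal inequality available; the only remaining technical care is the $\alpha$-Fubini reduction of the delayed $f$-term to the $\mathbb{H}_1$- and $\mathbb{H}_2$-norms, where one must track that the shifted time variable $v=s+u$ stays in $[-T,T]$ and that $e^{\beta v}\ge e^{-\beta T}$ is used to absorb the exponential weight.
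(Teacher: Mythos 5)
Your overall route coincides with the paper's: obtain the representation $Y(t)=\E\big[\psi(t)+\int_t^T f(t,s,Y_s,Z_{t,s})\,ds\,\big|\,\mathcal{F}_t\big]$ (the paper reaches it by applying It\^o's formula to $e^{\frac{\beta}{2}t}Y(t)$ and conditioning, you by conditioning directly --- these are equivalent), then invoke Doob's maximal inequality, then reduce the generator term via $({\bf A2})$, Fubini against $\alpha$ and the change of variable $v=s+u$ to $\|Y\|^2_{\mathbb{H}_1}+\|Z\|^2_{\mathbb{H}_2}$ plus $({\bf A3})$-(ii). Your one genuine refinement --- dominating the whole family $\big(e^{\beta t/2}|Y(t)|\big)_{0\le t\le T}$ by the single martingale $\E[\zeta\,|\,\mathcal{F}_\cdot]$ --- correctly addresses a point the paper glosses over: Doob's inequality cannot be applied to the $t$-indexed family $\E[|\psi(t)|\,|\,\mathcal{F}_t]+\E[\int_0^T e^{\beta s/2}|f(t,s,Y_s,Z_{t,s})|\,ds\,|\,\mathcal{F}_t]$, and indeed the paper's resulting display leaves a free $t$ on its right-hand side.

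However, your argument has a genuine gap at its final step: the claim $\E[\zeta^2]<\infty$ does not follow from the standing hypotheses, because $\zeta$ carries a supremum over $t$ while every integrability assumption available is pointwise or averaged in $t$. Concretely: (i) $\E\big[\sup_{0\le t\le T}e^{\beta t}|\psi(t)|^2\big]$ is not controlled by $({\bf A1})$, which asserts only $\psi(t)\in L^2$ for each fixed $t$ (take $\psi(t)=g(t)\xi$ with $g$ finite everywhere but unbounded on $[0,T]$); your remark that the terminal term is ``treated in the same way, dominating $|\psi(t)|$ through its conditional expectation'' is circular, since $\zeta$ contains $\sup_t|\psi(t)|$ itself. (ii) For the delayed $z$-term, after the $\alpha$-Fubini and the shift $v=s+u$ you are left with $\sup_{t}\int_{-T}^0\big(\int_0^T e^{\beta v}|Z(t+u,v)|^2\,dv\big)\,\alpha(du)$, and the hypothesis $Z\in\mathbb{H}_2$ controls only the $dt$-average of this quantity, never its supremum: with $\alpha=\delta_0$ it equals $\sup_t\int_0^T e^{\beta v}|Z(t,v)|^2\,dv$, which an $L^1(dt)$ bound cannot dominate. (iii) Likewise $\sup_t\int_0^T|f(t,s,0,0)|^2\,ds$ is not implied by the $ds\,dt$-integrability in $({\bf A3})$-(ii); sup-in-$t$ control of this term is exactly what the stronger H\"older-in-$t$ assumption $({\bf A4})$ buys (Remark \ref{R1}(c)), which Proposition \ref{P} does not assume. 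A related soft spot: passing from a countable dense set of times to all $t\in[0,T]$ requires right-continuity of $t\mapsto Y(t)$, not of the dominating martingale, and no path regularity of $Y$ is available at this stage (that is the content of Section 4, under $({\bf A4})$--$({\bf A5})$). To be fair, the paper's own proof makes the very same leap --- it verifies only the averaged bound $\E\int_{[0,T]^2}e^{\beta s}|f(t,s,Y_s,Z_{t,s})|^2\,ds\,dt<\infty$ and then asserts the $\mathcal{S}^2$ estimate --- so your more careful bookkeeping exposes, rather than creates, the difficulty; but as written, ``finite by the standing hypotheses'' is precisely the step that fails.
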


\begin{proof}
Let $ (Y,Z) $ be a process belong in $ \mathbb{H}_1 \times L^{2}([0,T],\mathbb{H}_2)$ and satisfies \eqref{Eq1}. Applying Itô formula to $ e^{\frac{\beta}{2}t}Y(t),\;\; (\beta > 0) $ and hence taking conditional expectation with respect $\mathcal{F}_t$, we have
\begin{eqnarray*}
&& e^{\frac{\beta }{2}t} Y(t)+\frac{\beta}{2} \E\left(\int_t^T e^{\frac{\beta }{2}s} Y(s) ds|\mathcal{F}_t\right)\\
& = & \E\left(e^{\frac{\beta }{2}T} \psi(t) +\int_t^T e^{\frac{\beta }{2}s}  f(t,s,Y_s,Z_{t,s} ) ds|\mathcal{F}_t\right). 
\end{eqnarray*}
Hence
\begin{eqnarray*}
e^{\frac{\beta}{2}t} \mid Y(t) \mid & \leqslant & e^{\frac{\beta }{2}T}\E \left[ \mid\psi(t) \mid \mid \mathcal{F}_t  \right] + \E \left[ \int_0^T e^{\frac{\beta}{2}s} \mid   f(t,s,Y_s,Z_{t,s} ) \mid ds \mid \mathcal{F}_t  \right].
\end{eqnarray*}
Moreover, Doob's inequality with Cauchy-Schwarz's and Jensen's inequalities yield   
\begin{eqnarray*}
&&\E \left[  \sup\limits_{0 \leqslant t \leqslant T} e^{\beta t} \mid Y(t) \mid^2 \right] \\
& \leq &  8e^{\beta T}\E [ \mid\psi(t) \mid^2 ] + 8T\E \left[ \int_0^T e^{\beta s} \mid  f(t,s,Y_s,Z_{t,s} ) \mid^2 ds \right].
\end{eqnarray*}
In view of $ (\bf A1) $ the first term of the right is finite. To prove that $ Y $ belong to $ \mathcal{S}^2 $, it suffice to provide
\begin{eqnarray*}
\E \left[\int_0^T e^{\beta s} \vert f(t,s,Y_s,Z_{t,s}) \vert^2 ds \right] & < & +\infty ,\;\; \forall  t\in [0,T].
\end{eqnarray*}
This is true if
\begin{eqnarray*}
\displaystyle \E  \left[\int_{[0,T]^2} e^{\beta s} \mid  f(t,s,Y_s,Z_{t,s} ) \mid^2 dsdt\right] & <& + \infty .
\end{eqnarray*}
For that, applying ({\bf A2}) we get
\begin{eqnarray}\label{eq2}
\int_{[0,T]^2} e^{\beta s} \vert f(t,s,Y_s,Z_{t,s}) \vert^2 dsdt 
& \leq &   2K \int_{[0,T]^2}e^{\beta s}\left(\int_{-T}^0 \left(  \left| Y(s+u) \right|^2 + \left| Z(t+u,s+u) \right|^2 \right)   \alpha (du)\right)  dsdt\nonumber\\
&& + 2 \int_{[0,T]^2} e^{\beta s}\vert f(t,s,0,0) \vert^2 dsdt.
\end{eqnarray}
We estimate the first term of right side of \eqref{eq2} as follows.
\begin{eqnarray*}
&& \int_{[0,T]^2} e^{\beta s}\left(\int_{-T}^0 \left(  \left| Y(s+u) \right|^2 + \left| Z(t+u,s+u) \right|^2 \right)   \alpha (du)\right)  dsdt \\
& = & \int_{-T}^0 e^{-\beta u}\left(  \int_{[u,T+u]^2}e^{\beta s} \left( \left| Y(s) \right|^2 + \left| Z(t,s) \right|^2   \right)ds dt     \right) \alpha (du)\\
& \leq & Te^{\beta T} \int_{-T}^T e^{\beta s}\left| Y(s) \right|^2 ds + \int_{[0,T]^2} e^{\beta s}\left|Z(t,s)\right|^2 ds dt, 
\end{eqnarray*}
 We apply Fubini's theorem, change the variables and the fact that $ Z(t,s) =0 $ and $ Y(t) = Y(0) $ for $t<0$ or $s<0$.

Then by putting the above in \eqref{eq2} we obtain
\begin{eqnarray*}
\displaystyle \E  \left[\int_{[0,T]^2} e^{\beta s} \mid  f(t,s,Y_s,Z_{t,s} ) \mid^2 dsdt\right]
& \leq & C \E \left[ \int_{-T}^T e^{\beta s}\left| Y(s) \right|^2 ds + \int_{[0,T]^2} e^{\beta s}\left| Z(t,s) \right|^2 ds dt  \right] \\
&& + 2 \E \left[ \int_{[0,T]^2} e^{\beta s}\vert f(t,s,0,0) \vert^2 dsdt \right] \\
& < & + \infty .
\end{eqnarray*}

\end{proof}

\textbf{Proof of Theorem \ref{T}}\textbf{}
\begin{description}
\item [(i)]\textbf{Existence}\\
We use Picard's iteration to build a sequence of processes and establish that its limit is solution of our BSVIE. Let set $ Y^0 (s) = Z^0 (t,s) = 0 $ and define recursively, for $n\in \mathbb{N} $
\begin{eqnarray}
Y^{n+1} (t) & = & \psi(t) + \int_t^T f(t,s,Y_s^n ,Z_{t,s}^n ) ds - \int_t^T Z^{n+1} (t,s) dW(s) ,\;\; 0 \leqslant t \leqslant T.\label{eq5}
\end{eqnarray}

\textit{Step 1: Given $(Y^n,Z^n)\in \mathbb{H}_1 \times \mathbb{H}_2 $, BSVIEs \eqref{eq5} has a unique solution $(Y^{n+1},Z^{n+1}) \in \mathbb{H}_1\times \mathbb{H}_2 $}.\\
With Proposition \ref{P}, we have for $t\in [0,T]$
\begin{eqnarray*}
\E\int_t^T \vert f(t,s,Y^n_s,Z^n_{t,s}) \vert^2 ds & < & + \infty. 
\end{eqnarray*}
Therefore for a fixed $t\in [0,T]$, the process $ X_t^{n+1} $ define by
\begin{eqnarray*}
X_t^{n+1} (u) = \E \left( \psi(t) + \int_t^T f(t,s,Y_s^n,Z_{t,s}^n) ds \mid \mathcal{F}_u \right) , \;\; u \in [0,T],
\end{eqnarray*}
is a square integrable $ (\mathcal{F}_u) $-martingale and the martingale representation theorem provides a unique process $ Z^{n+1} (t,.) \in \mathbb{H}_2 $ such that
\begin{eqnarray*}
X_t^{n+1} (u) = X_t^{n+1} (0) + \int_0^u Z^{n+1}(t,s) dW(s),\;\; u\in [0,T] .
\end{eqnarray*}
In particular
\begin{eqnarray*}
X_t^{n+1} (T) = X_t^{n+1} (0) + \int_0^T Z^{n+1}(t,s) dW(s)
\end{eqnarray*}
and
\begin{eqnarray*}
X_t^{n+1} (t) = X_t^{n+1} (0) + \int_0^t Z^{n+1}(t,s) dW(s) .
\end{eqnarray*}
For $t\in [0,T]$, if we set $ Y^{n+1} (t) = X_t^{n+1} (t) $, then
\begin{eqnarray}\label{Eq6}
Y^{n+1} (t) = X_t^{n+1} (T) - \int_t^T Z^{n+1}(t,s) dW(s).
\end{eqnarray}
We have also
\begin{eqnarray*}
X_t^{n+1} (T) = \psi(t) + \int_t^T f(t,s,Y_s^n,Z_{t,s}^n) ds,
\end{eqnarray*}
which with \eqref{Eq6} prove that $(Y^{n+1},Z^{n+1} )$ satisfies \eqref{eq5}. Moreover,
\begin{eqnarray*}
\E \left(\int_{-T}^T \mid Y^{n+1} (t) \mid^2 dt   \right) 
& = & \E \left[ \int_{-T}^T \left| \E \left( \psi(t) + \int_t^T f(t,s,Y_s^n,Z_{t,s}^n) ds \mid \mathcal{F}_t\right)  \right|^2 dt   \right]\\
& \leq &  \int_{-T}^T \E \left( \left| \psi(t) + \int_t^T f(t,s,Y_s^n,Z_{t,s}^n) ds  \right|^2  \right)dt \\
& \leq & 2\E \left(\int_0^Y \mid \psi(t) \mid^2dt  \right) + 2 \E\int_{[0,T]^2} \left| f(t,s,Y_s^n,Z_{t,s}^n)  \right|^2 ds dt.\\
& <  & + \infty.   
\end{eqnarray*}
Then, in view of Proposition \ref{P}, $ Y^{n+1} $ belongs to $ \mathcal{S}^2 $, which finally provides that $(Y^{n+1},Z^{n+1} )$ solves \eqref{eq5}.\\

\textit{Step 2: The sequence $(Y^n,Z^n)$ converges in $ \mathbb{H}_1 \times \mathbb{H}_2 $}.\\
For $ (t,s) \in [0,T]^2$, and in virtue of \eqref{eq5} $(\bar{Y}^n(t),\bar{Z}^n(t,s))= (Y^{n+1}(t) -Y^n (t),Z^{n+1}(t,s) -Z^n (t,s))$ satisfies equation
\begin{eqnarray*}
\bar{Y}^n(t) &=& \int_t^T \bar{f}_n (t,s)ds-\int_t^T \bar{Z}^n(t,s) dW(s),\;\; 0\leq t \leq T,
\end{eqnarray*} 
 where $\bar{f}_n (t,s)=f(t,s,Y_s^n,Z_{t,s}^n) -  f(t,s,Y_s^{n-1},Z_{t,s}^{n-1})$.
 
 For any $\beta > 0$, it follows from Itô's formula applying to $ e^{\beta t}\mid\bar{Y}^n(t) \mid^2$
\begin{eqnarray}\label{A1}
\E\left(e^{\beta t}\mid \bar{Y}^n(t)\mid^2 +\int_t^Te^{\beta s}|\bar{Z}^n(t,s)|^2ds\right) 
&=&-\beta\E\left(\int_t^Te^{\beta}|\bar{Y}^n(s)|^2ds+2\int_t^T\bar{Y}^n(s)\bar{f}^n(s)ds\right)\nonumber\\
& \leq & \frac{1}{\beta} \E\int_t^T e^{\beta s}\mid\bar{f}_n (t,s)  \mid^2 ds,
\end{eqnarray}
where we use inequality $2ab\leq \beta a^2+\frac{1}{\beta}b^2$.
Hence
\begin{eqnarray}\label{Eq7}
\E \left[ \int_{-T}^T e^{\beta t}\mid \bar{Y}^n(t) \mid^2 dt\right] & \leq &  \frac{1}{\beta} \E \left(  \int_{[0,T]^2} e^{\beta s}\mid  \bar{f}_n (t,s)\mid^2 dsdt \right).
\end{eqnarray}
On the other hand, with \eqref{A1}, Theorem 2.2 in \cite{Yong1} and \eqref{Eq7}, we have
\begin{eqnarray}\label{A2}
\E\left(\int_{[0,T]^2}e^{\beta s}|\bar{Z}^n(t,s)|^2ds\right) 
& \leq & \frac{1}{\beta} \E\left(\int_{[0,T]^2}e^{\beta s}\mid\bar{f}_n (t,s)  \mid^2 dsdt\right)+\E\left(\int_0^T\int_0^t e^{s\beta}|\bar{Z}^n(t,s)|^2dsdt\right)\nonumber\\
&\leq & \frac{1}{\beta} \E\left(\int_{[0,T]^2}e^{\beta s}\mid\bar{f}_n (t,s)\mid^2 dsdt\right)+2\E\left(\int_{[0,T]}e^{\beta t}\mid \bar{Y}^n(t)\mid^2dt\right)\nonumber\\
&\leq & \frac{3}{\beta}\E\left( \int_{[0,T]^2}e^{\beta s}\mid\bar{f}_n (t,s)\mid^2 dsdt\right).
\end{eqnarray}
Finally with $(\bf A2)$, Fubini's theorem, change of variables
and knowing that $Y^n(t) = Y^n(0) $ and $Z^n(t,s) =0$ for $t<0$ or $s<0$, we obtain
\begin{eqnarray*}
&&\E\left(\int_{-T}^{T}e^{\beta t}\mid \bar{Y}^n(t)\mid^2 dt+\int_{[0,T]^2}e^{\beta s}|\bar{Z}^n(t,s)|^2dsdt\right)\\
 &\leq & \frac{4}{\beta} \E\left[\int_{[0,T]^2} e^{\beta s}  \mid  \bar{f}_n (t,s)  \mid^2 dsdt  \right] \\ 
& \leq & \frac{4K}{\beta}  \E \left[ \int_0^T\int_0^T e^{\beta s}  \int_{-T}^0  \mid \bar{Y}^{n-1} (s+u)\mid^2 \alpha (du) dsdt \right] \\
&& +\frac{4K}{\beta} \E\left[\int_0^T \int_0^T e^{\beta s}  \int_{-T}^0  \mid \bar{Z}^{n-1} (t+u,s+u))\mid^2\alpha (du)dsdt \right] \\& \leq &  \frac{4K}{\beta}  \int_{-T}^0 e^{-\beta u} \alpha (du) \E \left[ T \int_{-T}^T e^{\beta s}\mid \bar{Y}^{n-1} (s) \mid^2 ds + \int_{[0,T]^2} e^{\beta s} \mid \bar{Z}^{n-1}(t,s) \mid^2  ds dt\right].   
\end{eqnarray*}
Taking $\beta = \frac{1}{T}$ we have by iterative argument
\begin{eqnarray}
&& \E \left[ \int_{-T}^T e^{\beta t}\mid \bar{Y}^n(t) \mid^2 dt + \int_{[0,T]^2} e^{\beta s}\mid \bar{Z}^n(t,s) \mid^2 ds dt \right] \nonumber\\
&& \leq \left(4TKe \max (1,T)\right)^n \E \left[ \int_{-T}^T e^{\beta t}\mid Y^{1} (t) \mid^2 ds + \int_{[0,T]^2} e^{\beta s} \mid Z^{1}(t,s) \mid^2  ds dt\right].\label{part}  
\end{eqnarray}
Suppose that $4TKe \max (1,T)<1$ i.e $T$ or $K$ be taken sufficiently small, then $(Y^n,Z^n)_{n\geq 1} $ is a Cauchy sequence in the Banach space $\mathbb{H}_1 \times \mathbb{H}_2 $. Consequently there exists a unique process $(Y,Z) \in \mathbb{H}_1 \times \mathbb{H}_2 $ such that
\begin{eqnarray*}
\E \left[ \int_{-T}^T e^{\beta t}\mid Y^n(t)-Y(t) \mid^2 dt + \int_{[0,T]^2} e^{\beta s}\mid Z^n(t,s) - Z(t,s) \mid^2 ds dt \right]\rightarrow 0,\;\; \rm{as} \;\; n \rightarrow + \infty.
\end{eqnarray*}

\textit{Step 3: The process $(Y,Z)$ solves BSVIEs \eqref{Eq1}}.\\
Since $(Y,Z)$ belongs to $\mathbb{H}_1 \times \mathbb{H}_2$ it follows from Proposition \ref{P} that $Y\in \mathcal{S}^2$. By taking the limits for \eqref{eq5}, one easily finds that $(Y,Z)$ solves \eqref{Eq1}.\\
\item [(ii)]\textbf{Uniqueness}\\
Let $(Y,Z)$ and $(Y',Z')$ be two solutions of BSVIEs \eqref{Eq1}. The process $(\Delta Y(t),\Delta Z(t,s))=(Y(t) -Y'(t),Z(t,s) -Z'(t,s))$ satisfies BSVIE
\begin{eqnarray*}
\Delta Y(t) = \int_t^T  \Delta f(t,s)ds - \int_t^T \Delta Z(t,s)dW(s),
\end{eqnarray*}
where $\Delta f(t,s) = f(t,s,Y_s,Z_{t,s})- f(t,s,Y'_s,Z'_{t,s})$.
Using the existence proof argument we get
\begin{eqnarray*}
&& \E \left[ \int_{-T}^T e^{\beta t}\mid \Delta Y(t) \mid^2 dt + \int_{[0,T]^2} e^{\beta s}\mid \Delta Z(t,s)\mid^2 ds dt \right] \\
&& \leq 4TKe \max (1,T) \E \left[  \int_{-T}^T e^{\beta t}\mid \Delta Y(t) \mid^2 dt + \int_{[0,T]^2} e^{\beta s}\mid \Delta Z(t,s) \mid^2 ds dt    \right].  
\end{eqnarray*}
Since $4TKe \max (1,T)< 1$ who is satisfied if the horizon time $T$ or a Lipschitz constant $K$ are small enough, we get 
\begin{eqnarray*}
\E \left[ \int_{-T}^T e^{\beta t}\mid \Delta Y(t) \mid^2 dt + \int_{[0,T]^2} e^{\beta s}\mid \Delta Z(t,s)\mid^2 ds dt \right] = 0.
\end{eqnarray*}
Finally, this implies that $\Delta Y(t)=0$ and $\Delta Z(t,s)=0$. The
proof is now complete.
\end{description}
Given the contravening conditions for the validation of Theorem 3.1, it is natural to ask if it is possible to obtain with other calculation techniques an existence and uniqueness result for fairly large terminal times and large Lipschitz constants. As an answer of this question, we can say that such extension is not possible. However, for a special class of generators independent of $y$ and where the measure of delay is supported by sufficiently small interval time, Theorem 3.1 may be generalized, as we provide now.
\begin{theorem}\label{T1}
Assume $({\bf A1}),\, ({\bf A2})$ and $({\bf A3})$ hold such that the generator is independent of $y_t$, i.e. for $t\in [0,T]$ we have $f(t,s,y_s,z_{t,s}) = g(t,s,z_{t,s})$. Let the measure $\alpha$ be supported by the interval $[-\gamma, 0]$. For a sufficiently small time delay $\gamma$, the backward stochastic differential equation \eqref{Eq1} has a unique solution $(Y,Z)\in\mathcal{S}^2\times \mathbb{H}_2$.
\end{theorem}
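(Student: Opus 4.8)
The plan is to run the same Picard scheme as in the proof of Theorem \ref{T}, but to exploit the absence of the $y$-variable so that the factor of $T$ responsible for the smallness condition there disappears entirely. First I would set $Y^0 = Z^0 = 0$ and define recursively
\begin{eqnarray*}
Y^{n+1}(t) = \psi(t) + \int_t^T g(t,s,Z^n_{t,s})\, ds - \int_t^T Z^{n+1}(t,s)\, dW(s).
\end{eqnarray*}
Because $g$ does not depend on $y$, each pair $(Y^{n+1},Z^{n+1})$ is driven only by $Z^n$. Since $g(t,s,z_{t,s})$ still satisfies $(\bf A2)$ and $(\bf A3)$, Step~1 of Theorem \ref{T} (solvability of each iterate in $\mathbb{H}_1\times\mathbb{H}_2$ via the martingale representation theorem, together with the a priori bound of Proposition \ref{P}) carries over verbatim.

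The heart of the argument is the contraction estimate for $\bar Y^n = Y^{n+1}-Y^n$ and $\bar Z^n = Z^{n+1}-Z^n$, which solve
\begin{eqnarray*}
\bar Y^n(t) = \int_t^T \bar g_n(t,s)\, ds - \int_t^T \bar Z^n(t,s)\, dW(s), \quad \bar g_n(t,s) = g(t,s,Z^n_{t,s}) - g(t,s,Z^{n-1}_{t,s}).
\end{eqnarray*}
Applying It\^o's formula to $e^{\beta t}|\bar Y^n(t)|^2$, integrating in $t$, and invoking Theorem~2.2 in \cite{Yong1} for the $\bar Z$-term exactly as in \eqref{A1}--\eqref{A2}, I would obtain
\begin{eqnarray*}
\E\left[\int_{-T}^T e^{\beta t}|\bar Y^n(t)|^2\, dt + \int_{[0,T]^2} e^{\beta s}|\bar Z^n(t,s)|^2\, ds\, dt\right] \leq \frac{4}{\beta}\, \E\left(\int_{[0,T]^2} e^{\beta s}|\bar g_n(t,s)|^2\, ds\, dt\right).
\end{eqnarray*}
Next, using $(\bf A2)$ with $\alpha$ supported on $[-\gamma,0]$ and $g$ independent of $y$, so that $|\bar g_n(t,s)|^2 \le K\int_{-\gamma}^0 |\bar Z^{n-1}(t+u,s+u)|^2\,\alpha(du)$, I would apply Fubini and the substitution $(t',s')=(t+u,s+u)$. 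The decisive point is that \emph{both} arguments of $Z$ carry the same shift $u$, so the double integral over $(t,s)$ is converted directly into a double integral over $(t',s')$; using $\bar Z^{n-1}=0$ outside $[0,T]^2$ and $e^{-\beta u}\le e^{\beta\gamma}$ for $u\in[-\gamma,0]$ (with $\alpha$ a probability measure), this yields
\begin{eqnarray*}
\E\left(\int_{[0,T]^2} e^{\beta s}|\bar g_n(t,s)|^2\, ds\, dt\right) \leq K e^{\beta\gamma}\, \E\left(\int_{[0,T]^2} e^{\beta s}|\bar Z^{n-1}(t,s)|^2\, ds\, dt\right).
\end{eqnarray*}
In contrast with the $y$-term of Theorem \ref{T}, no factor $T e^{\beta T}$ appears: the $y$-term was $t$-independent, so its $t$-integration produced the offending $T$, whereas the jointly shifted $z$-term absorbs the $t$-integration through the change of variables.

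Combining the two displays gives the one-step contraction with factor $4K e^{\beta\gamma}/\beta$, and since the integral of $|\bar Z^{n-1}|^2$ is dominated by the full $\mathbb{H}_1\times\mathbb{H}_2$-norm of $(\bar Y^{n-1},\bar Z^{n-1})$, this factor is \emph{independent of $T$}. The last step is to optimize over $\beta$: I need $\beta e^{-\beta\gamma} > 4K$, and since $\beta\mapsto\beta e^{-\beta\gamma}$ attains its maximum $\tfrac{1}{e\gamma}$ at $\beta=1/\gamma$, this is feasible precisely when $\gamma<\tfrac{1}{4eK}$. Taking $\beta=1/\gamma$ makes the factor equal to $4Ke\gamma<1$, so $(Y^n,Z^n)$ is Cauchy in $\mathbb{H}_1\times\mathbb{H}_2$ for \emph{every} $T$; its limit $(Y,Z)$ solves \eqref{Eq1}, lies in $\mathcal{S}^2\times\mathbb{H}_2$ by Proposition \ref{P}, and uniqueness follows from the same difference estimate applied to two solutions, as in part (ii) of the proof of Theorem \ref{T}. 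I expect the main obstacle to be the bookkeeping in the change of variables — in particular checking that after the joint shift $(t,s)\mapsto(t+u,s+u)$ and the truncation from $\bar Z^{n-1}=0$ off $[0,T]^2$ no residual $T$-dependence survives — together with the $\beta$-optimization showing that an arbitrary Lipschitz constant $K$ can be accommodated by shrinking $\gamma$, which is the entire content of the theorem.
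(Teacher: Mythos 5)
Your proof is correct and takes essentially the same approach as the paper: the same Picard iteration as Theorem \ref{T}, with the key observation that the jointly shifted change of variables $(t,s)\mapsto(t+u,s+u)$ in the $z$-term turns \eqref{part} into a $T$-independent contraction with constant of the form $\frac{CK}{\beta}e^{\beta\gamma}$, made smaller than $1$ by choosing $\beta$ large (the paper simply asserts this; you additionally optimize at $\beta=1/\gamma$ to get the explicit threshold $\gamma<\frac{1}{4eK}$). Your write-up is in fact more careful than the paper's three-line argument, notably in tracking the factor $4$ from the It\^o/Yong estimates and in explaining precisely why the $y$-term, and with it the factor $Te^{\beta T}$, disappears.
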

\begin{proof}
We unfold the same argument of Theorem \ref{T}. But due to the special form of generator, inequality \eqref{part} becomes
\begin{eqnarray*}
\E\left[\int_{-\gamma}^Te^{\beta s}\mid \bar{Y}^n(t) \mid^2dt+\int_{[0,T]^2} e^{\beta s}\mid \bar{Z}^n(t,s) \mid^2 ds dt \right]
& \leq & \dfrac{K}{\beta} \int_{-\gamma}^0 e^{-\beta u} \alpha (du)\E\left[ \int_{[0,T]^2} e^{\beta s} \mid \bar{Z}^{n-1}(t,s) \mid^2  ds dt\right]\\
&\leq & \dfrac{K}{\beta}e^{\beta\gamma}\E\left[ \int_{[0,T]^2} e^{\beta s}\mid\bar{Z}^{n-1}(t,s)\mid^2 ds dt\right].
\end{eqnarray*}
Since $\gamma$ is supposed small enough, one can take $\beta$ sufficiently big such that $\displaystyle\dfrac{K}{\beta}e^{\beta\gamma}$ will be smaller than $1$. This proves the convergence of $(Y^n,Z^n)_{n\in\N}$.
\end{proof}

\subsection{Non-existence and multiple solutions}

This subsection is devoted to confirm that BSVIEs with delayed generator can not have a unique solution in general condition of terminal time and Lipschitz condition. Therefore, a general extension of Theorem 3.1 is not possible. Indeed, we deal with two examples of BSVIEs whose have no solution or multiple solutions. 
\begin{example}
Let $\alpha$ be a Dirac measure at $-T$ and $K$ a positive constant. We consider the following BSVIE
\begin{eqnarray}\label{F1}
Y(t)= \psi(t) + \int_t^T \int_{-T}^{0}\frac{K}{t+1}Y(s+u){\bf 1}_{[0,+\infty)}(s)\alpha (du)ds -\int_t^T Z(t,s) dW(s),\;\; 0 \leq t\leq T.
\end{eqnarray}
More precisely according to the definition of Dirac measure, BSVIE \eqref{F1} can be rewritten as
\begin{eqnarray*}
Y(t)= \psi(t)+\int_t^T \frac{K}{t+1}Y(s-T)ds -\int_t^T Z(t,s)dW(s),\;\; 0 \leq t\leq T.
\end{eqnarray*}
Moreover, since $Y(t)=Y(0), \; \forall\, t<0$, we derive easily that BSVIE \eqref{F1} becomes
\begin{eqnarray}\label{F2}
Y(t)= \E(\psi(t)|\mathcal{F}_t) +  K\frac{T-t}{t+1}Y(0),\;\; 0 \leq t\leq T,
\end{eqnarray} 
such that for $t=0$ we have
\begin{eqnarray}
(1-TK)Y(0)=\E(\psi(0)).\label{i0}
\end{eqnarray}
Three cases open to us.
\begin{description}
\item [(a)] $ TK < 1 $.\\
For all $t\in [0,T]$, since $\E(\psi(t)|\mathcal{F}_t) \in L^2 ( \Omega,\mathcal{F}_t,\mathbb{P})$, it follows from the martingale representation that there exist a unique square integrable process $Z(t,.)$ such that 
\begin{eqnarray*}
\E(\psi(t)|\mathcal{F}_t) =  \E \left[ \psi(t)\right] + \int_0^{t} Z(t,s) dW(s).
\end{eqnarray*}
 Then according to \eqref{F2} and \eqref{i0}, we have
\begin{eqnarray}\label{F3}
Y(t) & = & \E \left[\psi(t)\right] +\frac{T-t}{t+1}KY(0) +  \int_0^t Z(t,s) dW(s) \nonumber\\
& = & \E \left[\psi(t)\right]+\frac{(T-t)K}{(t+1)(1-KT)}\E(\psi(0)) + \int_0^t Z(t,s) dW(s),\;\; 0 \leq t\leq T .
\end{eqnarray}
Finally, $(Y,Z)$ is the unique process that solves \eqref{F1}. Indeed, if we assume that there is another solution $ (\tilde{Y},\tilde{Z}) $, we get, in view of \eqref{i0}, $(1-T K)Y(0) = \E \left[\psi(0)\right] = (1-TK)\tilde{Y}(0) $ and then $Y(0)=\tilde{Y}(0)$. On the other hand we obtain
\begin{eqnarray*}
\int_0^t(Z(t,s)-\tilde{Z}(t,s)) dW(s) = 0, \;\; \mathbb{P}-a.s.,
\end{eqnarray*}
hence $Z=\tilde{Z}$ and $Y = \tilde{Y}$.
\item [(b)] $TK = 1$ and $\E\left[\psi(0)\right]\neq 0 $.\\
The condition $\E\left[\psi(0) \right] = (1-TK)Y(0) $ is not satisfied and therefore equation \eqref{F1} does not have any solution.
\item [(c)] $TK = 1$ and $\E\left[\psi(0)\right]= 0 $.\\
For $t\in [0,T]$, let recall $Z(t,.)$ appearing in the martingale representation of $\E(\psi(t)|\mathcal{F}_t) $ and consider, as in (a), the process $Y$ defined by
\begin{eqnarray}\label{F4}
Y(t) =\E \left[\psi(t)\right] +\frac{T-t}{t+1}KY(0) +  \int_0^t Z(t,s) dW(s),\;\; 0\leq t\leq T,
\end{eqnarray}
where $Y(0)$ is a arbitrary $\mathcal{F}_0 $-measurable random variable. Therefore any process $(Y,Z)\in \mathbb{S}^2(\mathbb{R}) \times \mathbb{H}^2(\mathbb{R}) $ satisfying \eqref{F4} solves \eqref{F1}.
\end{description}	
\end{example}

\begin{example}
Let $K\in \mathbb{R}$, $\varphi(.)$ a given function from $[0,T]$ to $L^2(\Omega, \mathcal{F}_T)$ and $\alpha$ designed a uniform measure on $[-T,0]$. We consider BSVIEs
\begin{eqnarray*}
Y(t)= \varphi(t) + \int_t^T \int_{-T}^0 K Y(s+u){\bf 1}_{\{ s+u\geq 0\}}(u)\alpha (du){\bf 1}_{\{t\leq s\}}(s)ds -\int_t^T Z(t,s) dW(s),\;\;  0\leq t\leq T. 
\end{eqnarray*}
Using the definition of uniform measure on $[-T,0]$, previous BSVIEs becomes 
\begin{eqnarray}\label{F5}
Y(t) =\varphi(t) +\frac{K}{T}\int_t^T \int_0^{s}Y(u)du ds -\int_t^T Z(t,s) dW(s), \;\; 0 \leq t\leq T.
\end{eqnarray}
Next
\begin{eqnarray} \label{F5bis}
Y(0) =\varphi(0)+\frac{K}{T}\int_0^T \int_0^{s}Y(u) du ds -\int_0^T Z(0,s) dW(s).
\end{eqnarray}

In order to simplify the calculations, we suppose $\varphi$ satisfy $\varphi(t)= e^{-t}\varphi(0)$, for all $t\in [0,T]$ which holds for example if $\varphi(t)= e^{T-t}\xi$. In view of \eqref{F5bis}, we can rewrite \eqref{F5} as follow: for $0\leq t\leq T$,
\begin{eqnarray*}
Y(t)&=& e^{-t}\varphi(0)+\frac{K}{T}\int_t^0\int_0^{s}Y(u)duds+\frac{K}{T}\int_0^T\int_0^{s}Y(u)duds-\int_t^T(Z(t,s)dW(s)-\int_0^T Z(0,s)dW(s)\\
&=&Y(0)+(e^{-t}-1)\varphi(0)-\frac{K}{T}\int_0^t\left(\int ^s_0Y(u)du\right)ds-\int_t^T(Z(t,s)dW(s)+\int_0^TZ(0,s))dW(s)\\
&=& Y(0)+(e^{-t}-1)\varphi(0)-\frac{K}{T}\int_0^t(t-s)Y(s)ds-\int_t^T(Z(t,s)dW(s)+\int_0^TZ(0,s))dW(s).
\end{eqnarray*}
Let consider its deterministic version 
\begin{eqnarray}\label{F7}
y(t) = y(0)+(e^{-t}-1)\varphi(0)-\frac{K}{T}\int_0^t(t-s)y(s)ds+h(t),
\end{eqnarray}
with $h$ belongs to $\mathcal{C}^2 (\mathbb{R})$ such that $h(0)=0$ and $y(0)$ a given initial condition. We obtain the non homogeneous linear second order differential equation
\begin{eqnarray}\label{F8}
\ddot{y} (t)+\frac{K}{T}y(t) = e^{-t}\varphi(0)+ \ddot{h}(t), 
\end{eqnarray}
whose homogeneous solution is defined by
\begin{eqnarray*}
y(t) = A\, e^{\beta t} + B\, e^{-\beta t},
\end{eqnarray*}
where for $K>0,\;\beta=\sqrt{-\frac{K}{T}}$ is seen as a complex number and $A,\; B$ are constants. Next, with similar steps used in \cite{P2}, we derive
\begin{eqnarray*}
Y(t)&=&\frac{Y(0)}{2}(e^{\beta t} + e^{-\beta t})+\frac{\varphi(0)}{2\beta}\left(\frac{e^{\beta t}}{\beta +1}+\frac{e^{-\beta t}}{\beta-1}\right)-\frac{e^{-t}}{(\beta^2-1)}\varphi(0)\\
&&-\int_t^TZ(t,s)dW(s)+\int_0^TZ(0,s)dW(s)\nonumber\\
&&+\frac{\beta}{2}\left[e^{\beta t}\int_0^t \int_t^TZ(s,u))dW(u) e^{-\beta s}ds+e^{-\beta t}\int_0^t \int_t^TZ(s,u)dW(u) e^{\beta s}ds \right]\nonumber\\
&&+\frac{\beta}{2}\left[e^{\beta t}\int_0^t \int_0^TZ(0,u))dW(u) e^{-\beta s}ds-e^{-\beta t}\int_0^t \int_0^TZ(0,u)dW(u) e^{\beta s}ds\right]\nonumber\\
&=&\frac{Y(0)}{2}(e^{\beta t} + e^{-\beta t})+\frac{\varphi(0)}{2\beta}\left(\frac{e^{\beta t}}{\beta +1}+\frac{e^{-\beta t}}{\beta-1}\right)-\frac{e^{-t}}{(\beta^2-1)}\varphi(0)\\
&&+\frac{1}{2}\int_0^T(e^{\beta(t-s)}+e^{-\beta(t-s)})Z(0,s)dW(s)\nonumber\\
&&+\int_t^T\left(-Z(t,s)+\frac{\beta}{2}\int_0^{t}
(e^{\beta(t -u)}+e^{-\beta(t-u)})Z(u,s)du\right)dW(s).
\end{eqnarray*}
Taking conditional expectation which respect $\mathcal{F}_t$, and since the processus $Y$ is $(\mathcal{F}_t)_{t\geq 0}$-adapted, we have
\begin{eqnarray}\label{Vol}
Y(t)&=&\frac{Y(0)}{2}(e^{\beta t} + e^{-\beta t})+\frac{\varphi(0)}{2\beta}\left(\frac{e^{\beta t}}{\beta +1}+\frac{e^{-\beta t}}{\beta-1}\right)-\frac{e^{-t}}{(\beta^2-1)}\varphi(0)\nonumber\\
&&+\frac{1}{2}\int_0^t(e^{\beta(t-s)}+e^{-\beta(t-s)})Z(0,s)dW(s)	
\end{eqnarray}
which implies  
\begin{eqnarray*}
\E \left[\varphi(T)\right]=\frac{Y(0)}{2}(e^{\beta T} + e^{-\beta T})+\frac{\E(\varphi(0))}{2\beta}\left(\frac{e^{\beta T}}{\beta +1}+\frac{e^{-\beta T}}{\beta-1}\right)-\frac{e^{-T}}{(\beta^2-1)}\E(\varphi(0)). 
\end{eqnarray*}
From the property of $\varphi$, we have
\begin{eqnarray*}
\frac{Y(0)}{2}=\left[\frac{\beta^2}{\beta^2-1}e^{-T}-\frac{1}{2\beta}\left(\frac{e^{\beta T}}{\beta+1}+\frac{e^{-\beta T}}{\beta-1}\right)\right]\frac{\E(\varphi(0))}{e^{\beta T}+e^{-\beta T}}
\end{eqnarray*}
\begin{description}
\item [(1)] Assume $K<0$. Then the unique solution $(Y,Z)\in \mathcal{S}^2\times\mathbb{H}_2$ of \eqref{F5} is given by
\begin{eqnarray*}
Y(t) &=& \left[\frac{\beta^2}{\beta^2-1}e^{-T}-\frac{1}{2\beta}\left(\frac{e^{\beta T}}{\beta+1}+\frac{e^{-\beta T}}{\beta-1}\right)\right]\frac{e^{\beta t}+e^{-\beta t}}{e^{\beta T}+e^{-\beta T}}\E(\varphi(0))\\
&&+\frac{\varphi(0)}{2\beta}\left(\frac{e^{\beta t}}{\beta +1}+\frac{e^{-\beta t}}{\beta-1}\right)-\frac{e^{-t}}{(\beta^2-1)}\varphi(0)+\int_0^tZ(t,s)dW(s),
\end{eqnarray*}
such that according to Theorem 2.2 in \cite{Yong1}, the process $Z$ is defined by
\begin{eqnarray*}
Z(t,s)&=& \frac{1}{2}(e^{\beta(t-s)}+e^{-\beta(t-s)})Z(0,s)
\end{eqnarray*}
 where $Z(0,s))$ is defined such $(Y(0),Z(0,.)$ satisfies BSVIE \eqref{F5}.
\item [(2)] Assume now $K>0$. This case is very interesting since, it gives both uniqueness, non existence and multiplicity of solutions.
From Euler's formula, \eqref{Vol} is equivalent to  
\begin{eqnarray*}
Y(t)&=&Y(0)\cos(\beta t)+\frac{\varphi(0)}{\beta^2-1}\left(\cos(\beta t)-i\frac{\sin(\beta t)}{\beta}\right)-\frac{e^{-t}}{(\beta^2-1)}\varphi(0)\\
&&+\int_0^t\cos(\beta(t-s))Z(0,s)dW(s),\label{V1}
\end{eqnarray*}
which implies
\begin{eqnarray*}
e^{-T}\E \left[\varphi(0)\right]=Y(0)\cos(\beta T)+\frac{\E(\varphi(0))}{\beta^2-1}\left(\cos(\beta T)-i\frac{\sin(\beta T)}{\beta}-e^{-T}\right).	
\end{eqnarray*} 
\begin{itemize}
\item [(a)] $\beta T<\frac{\pi}{2}$.
A unique solution $(Y,Z)\in \mathbb{S}^2 (\mathbb{R} )\times \mathbb{H}^2 (\R)$ of \eqref{F5} is given by
\begin{eqnarray*}
Y(t)&=&\frac{1}{\beta^2-1}\left(\beta^2e^{-T}-\cos(\beta T)+i\frac{\sin(\beta T)}{\beta}\right)\frac{\cos(\beta t)}{\cos(\beta T)}\E(\varphi(0))\\
&&+\frac{\varphi(0)}{\beta^2-1}\left(\cos(\beta t)-i\frac{\sin(\beta t)}{\beta}\right)-\frac{e^{-t}}{(\beta^2-1)}\varphi(0)\\
&&+\int_0^t\cos(\beta(t-s))Z(0,s)dW(s).
\end{eqnarray*}
\item [(b)] $\beta T=\frac{\pi}{2}$ and $\E\left[\varphi(0)\right]\neq 0$.\\
Since $(\beta^2e^{-T}+i\frac{\sin(\beta T)}{\beta})\E\left[\varphi(0)\right]=0$ is not satisfied, Equation \eqref{F5} does not have any solution.
\item [(c)] $\beta T=\frac{\pi}{2}$ and $\E\left[\varphi(0)\right]=0$.\\
In this case one can see that BSVIE \eqref{F5} may not have any solution, or may have multiple solutions. Indeed, since $\E\left[\varphi(0)\right]=0$, for an arbitrary process $Z(0,.)$  
\begin{itemize}
\item [(i)] If  
\begin{eqnarray*}
 \E \left[\int_0^T\mid\cos(\beta(T-s))Z(0,s)\mid^2 ds\right]=+\infty,
 \end{eqnarray*}
then equation \eqref{F5} does not have any solution.
\item [(ii)] On the other hand, if 
\begin{eqnarray*}
\E \left[ \int_0^T \mid \cos(\beta(T-s))Z(0,s)\mid^2 ds\right] <+\infty,
\end{eqnarray*}
then equation \eqref{F5} has multiple solutions $(Y,Z)\in \mathbb{S}^2(\mathbb{R})\times\mathbb{H}^2(\mathbb{R})$ given by
\begin{eqnarray}
Y(t)&=& Y(0)\cos(\beta t)-\frac{e^{-(1+\beta )t}}{\beta(\beta^2-1)}\varphi(0)+\int_0^t\cos(t-s)Z(0,s)dW(s).\label{V1}
\end{eqnarray}
with an arbitrary $Y(0)\in L^2(\mathbb{R})$. To end our example let us take $K=T$ and consider  
\begin{eqnarray*}
Z(0,s)= \frac{1}{\cos (\frac{\pi}{2}-s)},
\end{eqnarray*}
then since $\beta T=\frac{\pi}{2}$, we derive easily that $T=\frac{\pi}{2}$ and 
\begin{eqnarray*}
 \E \left[\int_0^T\mid\cos(\beta(T-s))Z(0,s)\mid^2 ds\right]&=&\frac{\pi}{2}
 \end{eqnarray*}
so that we have multiple solutions. Whereas for 
\begin{eqnarray*}
Z(0,s)= \frac{1}{\cos^2 (\frac{\pi}{2}-s)},
\end{eqnarray*}
we don't have any solution, since
\begin{eqnarray*}
\E \left[\int_0^{\frac{\pi}{2}} \left|\frac{1}{\cos (\frac{\pi}{2}-s)} \right|^2 ds \right] = +\infty.
\end{eqnarray*}
\end{itemize}
\end{itemize}
\end{description}
\end{example}

\section{Path regularity for solution of BSVIEs}
We are now ready to give the second main result of this paper which concern the regularity of the solution of BSVIEs \eqref{Eq1}.
\begin{theorem}
Assume $({\bf A2})$, $({\bf A4})$ and $({\bf A5})$ hold. For a sufficiently small time horizon $T$ or Lipschitz constant $K$, BSVIE \eqref{Eq1} admits a unique solution $(Y,Z)$. Moreover, the map $[0,T]\ni t\rightarrow Y(t) $ is continuous.
\end{theorem}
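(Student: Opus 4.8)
The plan is to separate the statement into its well-posedness part and its path-continuity part, and to treat the former as essentially a corollary of Theorem \ref{T}. First I would note that $({\bf A2})$, $({\bf A4})$ and $({\bf A5})$ already contain everything Theorem \ref{T} requires: the Lipschitz hypothesis $({\bf A2})$ is assumed verbatim, while by Remark \ref{R1}(c)--(d) the $L^r$-integrability in $({\bf A4})$-$(ii)$ forces both $({\bf A1})$ and $({\bf A3})$-$(ii)$, and $({\bf A3})$-$(i)$ is the standing convention $Y(t)=Y(0)$, $Z(t,s)=0$ for negative arguments. Hence, for $T$ or $K$ small enough (the same condition $4TKe\max(1,T)<1$), Theorem \ref{T} yields a unique solution $(Y,Z)\in\mathcal{S}^2\times\mathbb{H}_2$, and only the continuity of $t\mapsto Y(t)$ remains to be established.

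For the continuity I would work on the $L^r$-scale and aim at the mean Hölder bound $\E|Y(t')-Y(t)|^{r}\le C|t'-t|^{r\gamma}$ for all $0\le t\le t'\le T$. Once this is proved the conclusion is immediate: since $r>\tfrac1\gamma$ by $({\bf A5})$ we have $r\gamma>1$, so Kolmogorov's continuity criterion produces a modification of $t\mapsto Y(t)$ with $\mathbb P$-a.s.\ continuous paths. As a preliminary step I would first upgrade the integrability of the solution from the $L^2$-theory to the $L^r$-scale, showing $Y\in\mathcal{S}^r$ and $\sup_{t,s}\E|Z(t,s)|^{r}<\infty$; this is legitimate because $r>2$ by Remark \ref{R1}(d), and it follows by rerunning the Doob--Burkholder--Davis--Gundy argument of Proposition \ref{P} with exponent $r$ in place of $2$, feeding in $({\bf A4})$-$(ii)$ and $({\bf A5})$.

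The core is the increment estimate, which I would set up by writing \eqref{Eq1} at the two times $t$ and $t'=t+\delta$, subtracting, and isolating the integrals over the thin strip $[t,t+\delta]$. The difference $U(t):=Y(t+\delta)-Y(t)$ is then driven by the terminal increment $\psi(t+\delta)-\psi(t)$, the time-shift of the generator $f(t+\delta,s,\cdot)-f(t,s,\cdot)$, the two strip terms $\int_t^{t+\delta}f(t,s,Y_s,Z_{t,s})\,ds$ and $\int_t^{t+\delta}Z(t,s)\,dW(s)$, and a stochastic integral of $V(t,s):=Z(t+\delta,s)-Z(t,s)$. In the $L^r$-norm the terminal increment is $O(\delta)$ by $({\bf A5})$, the generator time-shift is $O(\delta^{\gamma})$ by $({\bf A4})$-$(i)$, and the two strip terms are $O(\delta)$ and $O(\delta^{1/2})$ respectively, using the $L^r$-integrability from the preliminary step together with Hölder's and the Burkholder--Davis--Gundy inequalities. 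Since $\gamma\le\tfrac12$ the slowest rate is $\delta^{\gamma}$, which after raising to the $r$-th power gives exactly the exponent $r\gamma$.

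The main obstacle is the remaining generator term. After adding and subtracting $f(t+\delta,s,Y_s,Z_{t,s})$ and applying $({\bf A2})$, it reduces to $\big(\int_{-T}^0|Z(t+\delta+u,s+u)-Z(t+u,s+u)|^2\alpha(du)\big)^{1/2}$, i.e.\ to the \emph{increment of $Z$ in its first argument}, which is $V_{t,s}$ in the delayed notation; for type-I BSVIEs no a priori regularity of $Z$ in that variable is available, so this term cannot be estimated directly. The way out is to observe that, modulo the strip corrections, the pair $(U,V)$ itself solves a BSVIE of the form \eqref{Eq1} with exactly the same delayed Lipschitz structure, the term $V_{t,s}$ playing the role of the delayed $z$-variable and the source being the collected $O(\delta^{\gamma})$ terms above. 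I would therefore apply the a priori estimate underlying Theorem \ref{T}, now run on the $L^r$-scale: under the smallness condition the contraction factor $4TKe\max(1,T)<1$ absorbs the $V$-contribution, so that $U$ and $V$ are controlled by the source terms alone, and the $\sup_t$ built into the $\mathcal{S}^r$-norm converts the bound into the pointwise estimate $\sup_t\E|Y(t+\delta)-Y(t)|^{r}\le C\delta^{r\gamma}$, which feeds Kolmogorov's criterion and closes the proof.
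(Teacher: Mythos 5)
Your reduction of well-posedness to Theorem \ref{T} via Remark \ref{R1} is exactly the paper's first step, and you correctly locate the central difficulty of the continuity part (the increment of $Z$ in its first argument, for which no a priori regularity is available). But your route to continuity has a genuine gap: the target bound $\E\mid Y(t')-Y(t)\mid^r\le C\mid t'-t\mid^{\gamma r}$ is strictly stronger than anything the paper proves, and your derivation of it breaks at the stochastic strip term $\int_t^{t'}Z(t,s)\,dW(s)$. The Burkholder--Davis--Gundy inequality reduces this term to $\E\bigl[\bigl(\int_t^{t'}\mid Z(t,s)\mid^2 ds\bigr)^{r/2}\bigr]$, and the preliminary estimate $\E\bigl[\bigl(\int_0^T\mid Z(t,s)\mid^2 ds\bigr)^{r/2}\bigr]\le C$ is integrability in $\omega$, not regularity in $s$: no power of $\mid t'-t\mid$ can be extracted from it, in contrast with the Lebesgue strip $\int_t^{t'}f\,ds$, where Cauchy--Schwarz in $s$ genuinely yields the factor $\mid t'-t\mid^{1/2}$. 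So the claimed $O(\delta^{1/2})$ for the stochastic strip is unobtainable, the mean-H\"older estimate on $Y$ cannot be established (indeed $Y$ need not be mean-H\"older at all), and Kolmogorov's criterion cannot be applied to $Y$ directly. The paper sidesteps this by never estimating increments of $Y$: it applies Kolmogorov to the two-parameter martingale field $X_t(u)=\E\bigl(\psi(t)+\int_t^T f(t,s,Y_s,Z_{t,s})ds\mid\mathcal{F}_u\bigr)$, for which only the $t$-direction needs a rate --- supplied uniformly in $u$ by Doob's maximal inequality, see \eqref{ZZZ} and \eqref{Eq9} --- while in the $u$-direction each path is already a continuous martingale; continuity of $Y(t)=X_t(t)$ then follows from bicontinuity of the field, with no modulus of continuity for $t\mapsto Y(t)$ ever claimed.

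Your one-shot contraction for $(U,V)$ also overreaches in two respects, and here the comparison with the paper is instructive. First, the contraction in the proof of Theorem \ref{T} lives in the weighted $L^2$ spaces $\mathbb{H}_1\times\mathbb{H}_2$; an $L^r$-scale a priori estimate for delayed BSVIEs is nowhere established, and its smallness threshold would involve $r$-dependent BDG constants, so invoking ``the same condition $4TKe\max(1,T)<1$'' is not justified (though some smallness condition is consistent with the statement). Second, on the boundary strip $t+u\in[-\delta,0)$ the quantity $V(t+u,s+u)$ is not an increment but the raw value $Z(t+\delta+u,s+u)$ (recall the convention $Z(t,s)=0$ for $t<0$ or $s<0$), weighted by $\alpha([-t-\delta,-t))$, which carries no power of $\delta$ for a general probability measure $\alpha$ (atoms are allowed); hence your source terms are not $O(\delta^{\gamma})$ uniformly in $t$. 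The paper instead handles the delayed $z$-dependence by propagating regularity through the Picard scheme: the induction hypothesis $(\mathcal{P}_n)$ carries precisely the $L^r$ H\"older bound on $t\mapsto Z^n(t,\cdot)$ needed to control the delayed $z$-difference at the next step (estimate \eqref{Z2}), and continuity of $Y$ is obtained as the uniform limit of the continuous iterates $Y^n$. (To be fair, \eqref{Z2} quietly passes over the same boundary-strip term, but that is a gap in the paper, not a vindication of the contraction route.) The structural repair for your proof is therefore to redirect Kolmogorov from $Y$ to the field $X_t(u)$ and to obtain the $Z$-regularity by induction along the iteration rather than by a one-shot $L^r$ contraction on the solution's increments.
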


\begin{proof}
According to Remark \ref{R1}, $({\bf A4})$ implies $ ({\bf A1})$ and $ ({\bf A3})$-$ (ii) $. Then existence and uniqueness are valid in virtue of Theorem \ref{T}. It remains to only treat the continuity of the map $ t \rightarrow Y(t) $. This proof will be divided into two steps. In the sequel $C $ will denote a constant which may change from line to line.

\textit{Step 1: Special BSVIE \eqref{Eq1} where $f$ is independent  $y$ and $z$.}\\
In this context, we have
\begin{eqnarray}
Y(t) & = & \psi(t) + \int_t^T f(t,s)ds - \int_t^T Z(t,s) dW(s). \label{Eq01}
\end{eqnarray}  
Define
\begin{eqnarray*}
X_t(u)=\E\left(\psi(t)+\int^T_tf(t,s)ds|\mathcal{F}_u\right),
\end{eqnarray*}
it not difficult to prove that for a fixed $t\in [0,T]$, le processus $(X_t(u))_{u\geq 0}$ is a square integral martingale. Hence, there exist a unique processus $Z(t,.)$ such that
\begin{eqnarray*}
X_t(u)=\E\left(X_t(0)\right)+\int^T_tZ(t,s)dW(s).
\end{eqnarray*}
Moreover, by Doob's maximal inequality, Hölder inequality and $({\bf A5})$, we have 
\begin{eqnarray}\label{ZZZ}
\E\left(\sup_{0\leq t\leq T}X_t(u)-X_t'(u)|^r\right)&\leq &\E\left(|X_t(T)-X_{t'}(T)|^r\right)\nonumber\\
&\leq &C\E\left(|\psi(t)-\psi(t)|^r+\left|\int^T_tf(t,s)ds-\int^T_{t'}f(t',s)ds\right|^r\right)\nonumber\\
&\leq & C\E\left(|\psi(t)-\psi(t)|^r)+\left|\int_{t}^{t'}f(t,s)ds\right|^r+\left|\int_{t'}^{T}(f(t,s)-f(t',s))ds\right|^r\right)\nonumber\\
&\leq & C\left(|t-t'|^r+\E\left(\int_{t}^{t'}|f(t,s)|^2ds\right)^{r/2}|t-t'|^{r/2}+|t-t'|^{\gamma r}\right)\nonumber\\
&\leq &C|t-t'|^{\gamma r}.
\end{eqnarray}
Thanks to $\gamma r>1$, by Kolmogorov's criterion, we know that $(u,t)\mapsto X_t(u)$ is bicontinuous. In particular, $t\mapsto X_t(t)=Y(t)$ is continuous.

On the other hand, by BDG's inequality, it follows from \eqref{ZZZ} that
\begin{eqnarray*}
\E \left[ \left( \int_0^T \mid Z(t,s)-Z(t',s) \mid^2 ds\right)^{\frac{r}{2}}\right]  & \leq & C\E \left[ \left| \int_0^T (Z(t,s)-Z(t',s))dW(s)\right|^{r}\right]\nonumber\\
&\leq & C\E\left(|X_t(0)-X_t(0)|^r+|X_{t}(T)-X_{t'}(T)|^r\right)\nonumber\\
&\leq & C\mid t-t' \mid^{\gamma r}.
\end{eqnarray*}
Moreover, using again assumption $({\bf A5})$ together with , we also get
\begin{eqnarray*}
\E \left[\left(\int_0^T \mid Z(0,s) \mid^2 ds\right)^{\frac{r}{2}} \right] &\leq & C\E \left[\left|\int_0^T Z(0,s)dW(s)\right|^{r}\right]\nonumber\\
&\leq & C\E(|X_0(T)-X_0(0)|^r)\nonumber\\
&\leq & C\E\left[|\psi(0)|^r+\left(\int^T_0|f(0,s)|^2ds\right)^{r/2}\right]\nonumber\\
&\leq &C.
\end{eqnarray*}
Thus if we set $g(t)=Z(t,.)$, then the $t\mapsto f(t)$ can be considered as a random process in Hilbert space $L^2([0,T],ds)$. Therefore estimate \eqref{ZZZ} means that
$$ \E\left(\|f(t)-f(t')\|^r_{L^2}\right)\leq C|t-t'|^{\gamma r}.$$ Hence it follows again from Kolmogorov criterion that
\begin{eqnarray*}
\E \left[\left(\sup_{0\leq t\leq T}\int_0^T \mid Z(t,s) \mid^2 ds\right)^{\frac{r}{2}} \right] &\leq & C\E\left(\|f(t)\|^r_{L^2}\right)\nonumber\\
&\leq & C.
\end{eqnarray*}
 Finally, 
\begin{eqnarray*}
\E\left[\left( \int_{-T}^T \mid Y(t) \mid^2 dt  \right)^{\frac{r}{2}}\right]  
& \leq & C.
\end{eqnarray*}

\textit{Step 2: BSVIEs with general delayed generator case}\\
Consider $ Y^0(s) = Z^0(t,s) = 0 $ and define recursively the sequence of processes $(Y^n,Z^n)_{n\geq 1}$ by: for a given $(Y^n,Z^n)\in  \mathbb{S}^2(\mathbb{R})\times\mathbb{H}^2(\mathbb{R})$,
\begin{eqnarray}\label{Eq8}
Y^{n+1}(t) & = & \psi(t) + \int_t^T f(t,s,Y^{n}_s,Z^{n}_{t,s} ) ds - \int_t^T Z^{n+1}(t,s) dW(s).
\end{eqnarray}
We need to establish for all $n\in \mathbb{N}^{\star}$ the following assertion  \\
$ (\mathcal{P}_n)
\left\lbrace  
\begin{array}{lcl}
\displaystyle \E \left[  \left( \int_0^T \mid Z^n(t,s)-Z^n(t',s) \mid^2 ds \right)^{\frac{r}{2}}\right]  & \leq & C_n \mid t-t' \mid^{\gamma r} ,   \\
\displaystyle \sup\limits_{t\in [-T,T]} \E \left[  \left( \int_0^T \mid Z^n(t,s) \mid^2 ds \right)^{\frac{r}{2}}\right]  & \leq & C_n, \\
\displaystyle \E\left[  \left( \int_{-T}^T \mid Y^n(t) \mid^2 dt \right)^{\frac{r}{2}}\right]  & \leq & C_n,   \\
t \mapsto Y^n(t) \;\; \mathrm{ is\; continuous} .  
\end{array}
\right. 
$\\
In virtue of the Step 1, $(\mathcal{P}_1) $ holds. Let suppose that there exists $n\in \mathbb{N}^{\star} $ such that $ (\mathcal{P}_n) $ is true. Let us prove the validity of $ (\mathcal{P}_{n+1})$. For this and like in Step 1 of existence's proof, let define
\begin{eqnarray*}
X_t^{n+1} (s) = \E \left(\psi(t) + \int_t^T f_n(t,u)du \mid \mathcal{F}_s  \right), 
\end{eqnarray*}
where $f_n(t,s) = f(t,s,Y_s^n,Z_{t,s}^n)$. It is easy to see that $ (X_t^{n+1}(s))_{s \geq 0} $ is continuous square integrable martingale. Hence the martingale representation provides that there exists a unique process $ Z^{n+1}(t,.) \in \mathbb{H}_2 $ such that
\begin{eqnarray*}
X_t^{n+1} (s) = X_t(0) + \int_0^s Z^{n+1}(t,u) dW(u),\;\; s \in [0,T].
\end{eqnarray*}
Moreover by Doob's maximal and Hölder's inequalities, we have for any $0\leq t<t'\leq T$,
\begin{eqnarray}\label{Eq9}
&&\E \left(  \sup\limits_{0\leq s\leq T} \mid X_t^{n+1} (s) - X_{t'}^{n+1} (s) \mid^r\right) \nonumber \\
& \leq &   C \E \left(  \mid X_t^{n+1} (T) - X_{t'}^{n+1} (T) \mid^r \right) \nonumber\\
& = &  C\E \left[ \left| \int_t^T f_n(t,s) ds - \int_{t'}^T f_n(t',s) ds   \right|^r  \right] \nonumber\\
& \leq &  C \E \left[ \left| \int_t^{t'} f_n(t,s) ds  \right|^r +  \left| \int_{t'}^T (f_n(t,s)  -  f_n(t',s)) ds   \right|^r   \right] \nonumber\\
& \leq & C  \mid t'-t \mid^{\frac{r}{2}} \E \left[ \left(\int_0^T |f_n(t,s)|^2ds \right)^{\frac{r}{2}  } \right]+C\E\left[ \left( \int_{t'}^T |f_n(t,s)  -  f_n(t',s)|^2 ds\right)^{r/2}\right].  
\end{eqnarray}
Next, it follows from assumptions $(\bf A2) $ and $(\bf A4)$ that 
\begin{eqnarray}\label{Z1}
\E\left(\int_0^T |f_n(t,s)|^2ds\right)&\leq & \E\left[\int^T_0\left(\int_{-T}^{0}(|Y^n(s+u)|^2+|Z^n(t+u,s+u)|^2)\alpha(du)\right)ds\right]\nonumber\\
&&+\E\left[\int_0^T |f(t,s,0,0)|^2ds\right]\nonumber\\
&\leq &\E\left(\int_{-T}^{T}|Y^n(s)|^2ds\right)+\sup_{0\leq t\leq T}\E\left(\int_{0}^{T}|Z^n(t,s)|^2)ds\right)\nonumber\\
&&+\E\left[\int_0^T |f(t,s,0,0)|^2ds\right]\nonumber\\
&\leq & C.
\end{eqnarray}
and
\begin{eqnarray}\label{Z2}
\E\left[ \left( \int_{t'}^T |f_n(t,s)  -  f_n(t',s)|^2 ds\right)^{r/2}\right]\leq C|t'-t|^{\gamma r}.
\end{eqnarray}
Finally, in view of recursive hypothesis, it follows from \eqref{Eq9} to \eqref{Z2} that 
\begin{eqnarray*}
\E \left(  \sup\limits_{0\leq s\leq T} \mid X_t^{n+1} (s) - X_{t'}^{n+1} (s) \mid^r\right)\leq C_n|t'-t|^{\gamma r}.
\end{eqnarray*}

Thanks to $ \gamma r > 1 $, Kolmogorov's criterion provides that $ (t,s) \mapsto X_t^{n+1} (s) $ is bi-continuous. In particular, $ t \mapsto X_t^{n+1}(t) = Y^{n+1}(t) $ is continuous.

On the other hand, by BDG's inequality together with Doob's maximal inequality and \eqref{Eq9} we have
\begin{eqnarray*}
\E \left[ \left( \int_0^T \mid Z^{n+1}(t,s)-Z^{n+1}(t',s) \mid^2 ds\right)^{\frac{r}{2}}\right]  & \leq & C \E\left[ \sup\limits_{0\leq u\leq T} \left| \int_0^u (Z^{n+1}(t,s)-Z^{n+1}(t',s)) dW(s)  \right|^r \right] \\
& \leq & C \E \left[ \sup\limits_{0\leq u\leq T} \left| X_t^{n+1} (u) - X_{t'}^{n+1} (u)\right|^r\right.\\
&&\left.  + \left| X_t^{n+1} (0) - X_{t'}^{n+1} (0)\right|^r   \right] \\
& \leq & C \mid t-t' \mid^{\gamma r}.
\end{eqnarray*}
Moreover, BDG's and Doob's maximal inequalities, assumptions $ ({\bf A2}) $ and $ ({\bf A4} ) $ together with $ (\mathcal{P}_n) $ derive
\begin{eqnarray*}
\E \left[ \left( \int_0^T \mid Z^{n+1}(0,s) \mid^2 ds\right)^{\frac{r}{2}}\right] 
& \leq & C \E\left[ \sup\limits_{0\leq u\leq T}  \left| \int_0^u Z^{n+1}(0,s) dW(s)  \right|^r \right]  \\
& \leq &  C \E\left[ \sup\limits_{0\leq u\leq T}  \left| X_0^{n+1} (u) - \E ( X_{0}^{n+1} (0))  \right|^r \right]  \\
& \leq &  C \E\left[ \left| X_0^{n+1} (T) -  X_{0}^{n+1} (0)  \right|^r \right]\\
& \leq &  C \E\left[ \mid \xi\mid^r + \left( \int_0^T \mid f(0,s,Y_s^n,Z_{0,s}^n \mid^2 ds  \right)^{\frac{r}{2}} \right]\\
& \leq & C \left( \E ( \mid \xi\mid^r ) + \E \left[ \left( \int_{-T}^T \mid Y^n(s) \mid^2 ds   \right)^{\frac{r}{2}} \right] \right. \\
&& \left.  + \sup\limits_{0\leq t\leq T} \E \left[ \left( \int_{-T}^T \mid Z^n(0,s) \mid^2 ds   \right)^{\frac{r}{2}} \right] \right. \\
&& \left. + \E \left[ \left( \int_{0}^T \mid f(0,s,0,0) \mid^2 ds   \right)^{\frac{r}{2}} \right]  \right)\\
&& \leq  C_{n+1}.   
\end{eqnarray*}
Next, since $ Y^{n+1} (t) = X_t^{n+1}(t) $, following the same computation as above we get
\begin{eqnarray*}
\E \left[ \left( \int_{-T}^T \mid Y^{n+1}(t) \mid^2 dt\right)^{\frac{r}{2}}\right]
& \leq & C_{n+1},
\end{eqnarray*}
which proves $( \mathcal{P}_{n+1}) $. Finally, it follows from recurrence principle that for all $ n \in \mathbb{N}^{\star} $, the function $t\rightarrow Y_t ^n $ is continuous a.s.

We have yet to establish that the sequence $ Y^n$ converges in the space of continuous functions. To make this, similar to Step 3 of the proof of Theorem \ref{T}, we have
\begin{eqnarray*}
\E \left( \sup\limits_{0\leq t\leq T} \mid Y^{n+1} (t) - Y^n (t) \mid^r   \right) 
& \leq C (\beta )^{rn},
\end{eqnarray*} 
where $C$ is independent of $n$ and $0<\beta<1$. Hence, there exists a continuous adapted process $\bar{Y}$ such that
\begin{eqnarray*}
\lim\limits_{n \rightarrow + \infty} \E \left( \sup\limits_{0\leq t\leq T} \mid Y^{n+1} (t) -\bar{Y} (t) \mid^r   \right) = 0 .
\end{eqnarray*}
Next, taking the limit in equation \eqref{Eq8}, it follows that $ \bar{Y}$ solves BSVIE \eqref{Eq1}. Finally by the uniqueness of solution, we obtain $Y=\bar{Y}$ and then the map $t \rightarrow Y(t)$ is continuous.
\end{proof}
\begin{remark}
As focused in introduction, this paper is the first in a series of several futur works. Indeed, this current work allowed us to derive maximum principle for optimal control related to FVSVIE with delayed generator. Then, like in \cite{P1,P'1}, we will apply delayed BSVIEs for pricing, hedging and portfolio management in insurance and finance.	
\end{remark}

\label{lastpage-01}

\begin{thebibliography}{99}
\bibitem{P9} Aman, A. and N'zi, M.(2005). Backward stochastic nonlinear Volterra integral equation with local Lipschitz drift. Probab. Math. Statist.-Wroclaw Univ. 25, 105-127.

\bibitem{Bell} Bell, D. (1985). Disappointment in decision making under uncertainty. Operations Research 33, 1-27.

\bibitem{CE} Chen, Z. and Epstein, L. (2001). Ambiguity, risk and asset returns in continuous time. Econometrica 70, 1403-1444. 

\bibitem{HA} Coulibaly, H.  and Aman, A.(2019). Backward stochastic differential equations with time delayed and non-Lipchitz generators. Asian J Control, 1-8. 

\bibitem{DE} Duffie, D. and  Epstein, L. (1992). Stochastic differential utility. Econometrica, 60 353-394.

\bibitem{DH} Duffie, D., Huang, C.F. (1986). Stochastic production exchange equilibria. Research paper No. 974, Graduate School of Business, Stanford University, Stanford

\bibitem{DR} Dybvig, P. and  Rogers, L. C. G. (2010).  High hopes and disappointment. Preprint.

\bibitem{P1} Delong, L. (2012). Applications of time-delayed backward stochastic differential equations to pricing, hedging and portfolio management. Applicationes Mathematicae 39, 463-488.

\bibitem{P'1} Delong, L. (2012). BSDEs with time-delayed generators of a moving average type with applications to non-monotone preferences. Stochastic Models 28, 281-315.

\bibitem{P2} Delong, L. and Imkeller, P. (2010) Backward stochastic differential equations with time delayed generators-results and counterexamples. Ann. Appl. Probab. 20 (4), 1512-1536.

\bibitem{P3} Delong, L. and Imkeller, P. (2010). On Malliavin's differentiability of BSDEs with time delay generators driven by Brownian motions and Poisson random measures. Stochastic Processes and their Applications 120, 1748-1175.

\bibitem{KPQ} El Karoui, N., Peng, S. and Quenez, M.C. (2001). A dynamic maximum principle for the optimization of recursive utilities under constraints. Ann. Appl. Probab. 11 664-693.

\bibitem{Ha} Hamaguchi, Y. (2021). Extended backward stochastic Volterra integral equations and their applications to
time-inconsistent stochastic recursive control problems. Mathematical Control and Related Fields, 11(2): 433-478.

\bibitem{Heral} Hernandez, C. and Possama, D. (2021). A unified approach to well-posedness of type-I backward stochastic Volterra integral equations. Electronic Journal of Probability, 26(89): 1-35.

\bibitem{LQ} Lazrak, A. and  Quenez, M.C. (2003). A generalized stochastic utility. Math. Oper. Res. 28, 154-180.

\bibitem{Lal} Li, W.,  Wu, R. and Wang, K. (2004). Existence and uniqueness of M-solutions for backward stochastic Volterra
integral equations. Electronic Journal of Differential Equations, 2014 (178):1-16, 2014.

\bibitem{P8} Lin, J. (2002). Adapted solution of backward stochastic nonlinear Volterra integral equation,
Stoch. Anal. Appl. 20, 165-183.

\bibitem{LP} Loewenstein, G. and Prelec, D. (1993) Preferences for sequences of out-comes. Psychological Review 100, 91.108.

\bibitem{PP} Pardoux, E. and Peng, S. (1990). Adapted solution of backward stochastic differential equation. Syst. Cont. Lett. 4  55-61

\bibitem{PP1} Pardoux, E. and Protter, P. (1990). Stochastic Volterra equations with anticipating coefficients. Ann. Probab. 18, 1635-1655

\bibitem{Popier} Popie A. (2021). Backward stochastic Volterra integral equations with jumps in a general filtration. ESAIM: Probability and Statistics, 25:133-203.

\bibitem{Protter} Protter, P. (1985). Volterra equations driven by semimartingales. Ann. Probab. 13 (2), 519-530.

\bibitem{Rozen} Rozen, K. (2010). Foundations of intrinsic habit formation. Econometrica 78, 1341-1373.

\bibitem{S} Skiadas, C. (2003). Robust control and recursive utility. Finance Stoch. 7, 475-489.

\bibitem{SS} Schroder, M. Skiadas, C. (1999). Optimal consumption and portfolio selection with stochastic utility, J. Econom. Theory 89, 68-126.

\bibitem{Hal} Tuo, N., Coulibaly H. and Aman, A. (2018). Reflected backward stochastic differential equations with delayed generator. Random Operator and Stochastic Equations 26 No. 1, 11-22.

\bibitem{W} Wang, T. (2012). $L^p$-solutions of backward stochastic Volterra integral equations. Acta Mathematica Sinica, English Series, 28(9): 1875-1882.

\bibitem{Wal} Wang, H.; Yong, J. and Zhang, J. (2020). Path dependent Feynman-Kac formula for forward backward stochastic Volterra integral equations. ArXiv preprint arXiv:2004.05825.

\bibitem{P'4} Wang, T. and Yong, J. (2019). Backward stochastic Volterra integral equations-representation of adapted
solutions. Stochastic Processes and their Applications, 129(12): 4926-4964.

\bibitem{P4} Wang, Z. Zhang, X. (2007). Non-Lipschitz backward stochastic volterra type equations with jumps. Stochastics and Dynamics Vol. 7, No. 4, 479-496.


\bibitem{Yong1} Yong, J. (2006) Backward stochastic Volterra integral equations and some related problems. Stochastic Processes and their Applications 116 No. 5, 779-795.

\bibitem{Yong2} Yong, J. (2008). Well-posedness and regularity of backward stochastic Volterra integral equations, Probab. Theory Relat. Fields 142, 21-77.

\bibitem{YongZhou} Yong, J. Zhou, X.Y. (1999). Stochastic Controls: Hamiltonian Systems and HJB Equations, Springer-Verlag, New York. 

\bibitem{Ral} Zhou, Q Ren, Y. (2012). Reflected backward stochastic differential equations with time delayed generators. Statistics and Probability Letters 82, 979-990. 












\end{thebibliography}
\end{document}